\documentclass[10pt]{article}
\usepackage[T1]{fontenc}
\usepackage[cp1250]{inputenc}
\usepackage{lmodern}

\usepackage[english]{babel}
\usepackage{latexsym}
\usepackage{amsmath}
\usepackage{indentfirst}
\usepackage{amsthm}
\usepackage{amssymb}
\usepackage{mathabx}
\usepackage{amsfonts}
\usepackage{stackrel}
\usepackage{dsfont}
\usepackage{tikz}
\usepackage{slashbox}
\usepackage[mathscr]{eucal}
\usepackage{authblk}
\usepackage{hyperref}

\newtheorem{thm}{Theorem}
\newtheorem{lemma}[thm]{Lemma}
\newtheorem{cor}[thm]{Corollary}

\newtheorem{defin}{Definition}

\newcommand{\reals}{\mathbb{R}}
\newcommand{\naturals}{\mathbb{N}}

\newcommand{\complex}{\mathbb{C}}

\newcommand{\eps}{\varepsilon}
\newcommand{\supp}{\text{supp}}

\newcommand{\Spher}{\mathcal{S}}
\newcommand{\KGK}{K\backslash G/K}
\newcommand{\X}{\mathcal{X}}

\textwidth = 460pt
\oddsidemargin = 15pt
\voffset = -20pt
\hoffset = -15pt
\marginparwidth = -10pt
\marginparsep = 0pt

\begin{document}
\title{Sobolev spaces on Gelfand pairs}
\author{Mateusz Krukowski}
\affil{Institute of Mathematics, \L\'od\'z University of Technology, \\ W\'ol\-cza\'n\-ska 215, \
90-924 \ \L\'od\'z, \ Poland \\ \vspace{0.3cm} e-mail: mateusz.krukowski@p.lodz.pl}
\maketitle

\begin{abstract}
The primary aim of the paper is the study of Sobolev spaces in the context of Gelfand pairs. The article commences with providing a historical overview and motivation for the researched subject together with a summary of the current state of the literature. What follows is a general outline of harmonic analysis on Gelfand pairs, starting with a concept of positive-semidefinite functions, through spherical functions and spherical transform and concluding with the Hausdorff-Young inequality. The main part of the paper introduces the notion of Sobolev spaces on Gelfand pairs and studies the properties of these spaces. It turns out that Sobolev embedding theorems and Rellich-Kondrachov theorem still hold true in this generalized context (if certain technical caveats are taken into consideration). 
\end{abstract}

\smallskip
\noindent 
\textbf{Keywords : } Gelfand pairs, spherical transform, Hausdorff-Young inequality, Sobolev spaces, Sobolev embedding theorems, Rellich-Kondrachov theorem, bosonic string equation\\
\vspace{0.2cm}
\\
\textbf{AMS Mathematics Subject Classification : } 43A15, 43A32, 43A35, 43A40, 43A90

\section{Introduction}

It is difficult to imagine a mathematician working in a field of differential equations, who has not heard of Sobolev spaces. These spaces are named after Sergei Sobolev ($1908-1989$), although they were known before the rise of the Russian mathematician to academic stardom. In 1977 Gaetano Fichera wrote (comp. \cite{Graffi}):
``These spaces, at least in the particular case $p=2$, were known since the very beginning of this century, to the Italian mathematicians Beppo Levi and Guido Fubini who investigated the Dirichlet minimum principle for elliptic equations.'' According to Fichera, at the beginning of the fifties, a group of French mathematicians decided to dub the spaces in question and they came up with the name ``Beppo Levi spaces''. Such a choice, however, was frowned up by Beppo Levi ($1875-1961$) himself, so the name had to be changed. Eventually, the spaces were named after Sergei Sobolev and the rest is history...  

Today, Sobolev spaces are the subject of countless papers, articles and monographs. It is a futile effort to try to list them all and thus we restrict ourselves to just a handful of selected examples. Among the most popular (in the author's personal opinion) are Brezis' ``Functional Analysis, Sobolev Spaces and Partial Differential Equations'' (comp. \cite{Brezis}) or Leoni's ``A First Course in Sobolev spaces'' (comp. \cite{Leoni}). These are by no means the only worth-reading monographs $-$ we should also mention Adams' and Fournier's ``Sobolev spaces'' (comp. \cite{AdamsFournier}),  Fran\c{c}oise and Gilbert Demengel's ``Functional Spaces for the Theory of Elliptic Partial Differential Equations'' (comp. \cite{Demengels}), Evans' ``Partial Differential Equations'' (comp. \cite{Evans}), Necas' ``Direct Methods in the Theory of Elliptic Equations'' (comp. \cite{Necas}), Tartar's ``An Introduction to Sobolev Spaces and Interpolation Spaces'' (comp. \cite{Tartar}) and many more. 

Although more than 70 years have passed since the birth of Sobolev spaces, they still remain an active field of research. Mathematicians study Sobolev spaces on time scales (comp. \cite{AgarwalOterEspinarPereraVivero, FengSuYao, LiZhou, AhmadBaigRahmanSaleem}) as well as the Sobolev spaces with variable exponent (comp. \cite{EdmundsRakosnik1, EdmundsRakosnik2, KovacikRakosnik, MihailescuRadulescu, SamkoVakulov}). Plenty of papers is devoted to fractional Sobolev spaces (comp. \cite{BahrouniRadulescu, MazyaShaposhnikova, NezzaPalatucciValdinoci, Strichartz, Swanson}). Even at the author's alma mater, \L\'od\'z University of Technology, there is a research group studying Sobolev's legacy (comp. \cite{BeldzinskiGalewski1, BeldzinskiGalewski2, BeldzinskiGalewski3}).

The current paper is was primarily inspired by the works of Przemys\l{}aw G\'orka, Tomasz Kostrzewa and Enrique Reyes, particularly \cite{GorkaKostrzewaReyes}, \cite{GorkaKostrzewaReyes2} and \cite{GorkaReyes}. In their articles, they defined and studied Sobolev spaces on locally compact \textit{abelian} groups. It turned out that much of the classical theory, when the domain is $[0,1]$ or $\reals$, can be recovered in the setting of locally compact abelian groups. It is of crucial importance that the tools that G\'orka, Kostrzewa and Reyes used in their work come from the field of abstract harmonic analysis. This is vital, because as a general rule of thumb ``harmonic analysis on locally compact abelian groups can be generalized to harmonic analysis on Gelfand pairs'' (naturally there are notable exceptions to that ``rule''). This is our starting point and, at the same time, one of the main ideas permeating the whole paper.

Chapter \ref{chapter:HarmonicanalysisGelfandpairs} serves the purpose of laying out the preliminaries of harmonic analysis on Gelfand pairs. Our exposition mainly follows (but is not restricted to) van Dijk's ``Introduction to Harmonic Analysis and Generalized Gelfand pairs'' (comp. \cite{Dijk}) and Wolf's ``Harmonic Analysis on Commutative Spaces'' (comp. \cite{Wolf}). We recap the basic properties of positive-semidefinite and spherical functions and summarize the notion of Gelfand pairs. We proceed with a brief description of the counterpart of Fourier transform in the context of Gelfand pairs, namely the spherical transform. We invoke the celebrated Plancherel theorem, which can be viewed as a kind of bridge between the functions on group $G$ and functions on the dual object of $G$. A significant amount of space is devoted to Hausdorff-Young inequality (which follows from Riesz-Thorin interpolation) and its inverse - we even go to some lengths to provide a not-so-well-known proof of the latter.

Chapter \ref{chapter:Sobolevspaces} aspires to be the main part of the paper. It focuses on the investigation of Sobolev spaces on Gelfand pairs. Theorems \ref{embedding}, \ref{embeddingcontinuous} and \ref{Sobolevembeddingtheorem} generalize the acclaimed Sobolev embedding theorems. It is safe to say that Theorem \ref{RellichKondrachov} is the climax of the paper. It is the counterpart of the eminent Rellich-Kondrachov theorem from the classical theory of Sobolev spaces. 

\section{Harmonic analysis on Gelfand pairs}
\label{chapter:HarmonicanalysisGelfandpairs}

The current chapter serves as a rather brief overview of the harmonic analysis on Gelfand pairs. We commence with the concept of positive-semidefinite functions, explaining their basic properties in \mbox{Lemma \ref{propertiesofpositivesemidefinite}}. Subsequently we introduce the notion of spherical functions and define the spherical transform, whose theory bears a striking resemblance to the theory of the Fourier transform for locally compact abelian groups. We discuss a counterpart of the Plancherel theorem (Theorem \ref{Planchereltheorem}) as well as the inverse spherical transform. The closing part of the chapter is centred around the Hausdorff-Young inequality (Theorem \ref{HausdorffYoung}) and its inverse (Theorem \ref{inverseHausdorffYoung}), which are applied in the sequel.

\subsection{Positive-semidefinite functions, spherical functions and Gelfand pairs}

First of all, let us emphasize that from this point onward, we work under the assumption that 
\begin{center}
\textit{$G$ is a locally compact (Hausdorff) group.}
\end{center}

\noindent
Later on, we will add more assumptions on $G$, but it will \textit{never} cease to be at least a locally compact group. Now, without further ado, we introduce the concept of \textit{positive-semidefinite functions} (in the definition below, and throughout the whole paper $\dagger$ stands for the Hermitian transpose):

\begin{defin}(comp. Definition 32.1 in \cite{HewittRoss2}, p. 253 or Definition 8.4.1 in \cite{Wolf}, p. 165)\\
A function $\phi\colon G\longrightarrow\complex$ is called positive-semidefinite if 
\begin{gather}
\left(\begin{array}{c}
z_1\\
z_2\\
\vdots\\
z_N
\end{array}\right)^{\dagger}\cdot
\left(\begin{array}{cccc}
\phi\left(x_1^{-1}x_1\right) & \phi\left(x_1^{-1}x_2\right) & \ldots & \phi\left(x_1^{-1}x_N\right)\\
\phi\left(x_2^{-1}x_1\right) & \phi\left(x_2^{-1}x_2\right) & & \phi\left(x_2^{-1}x_N\right)\\
\vdots & & \ddots & \vdots\\
\phi\left(x_N^{-1}x_1\right) & \phi\left(x_N^{-1}x_2\right) & \ldots & \phi\left(x_N^{-1}x_N\right)\\
\end{array}\right)
\cdot 
\left(\begin{array}{c}
z_1\\
z_2\\
\vdots\\
z_N
\end{array}\right) \geqslant 0,
\label{positivesemidefinitematrix}
\end{gather}

\noindent
for every $N\in\naturals,\ (x_n)_{n=1}^N\subset G$ and $(z_n)_{n=1}^N \subset \complex^N$. 
\end{defin}

For a fixed sequence of elements $(x_n)_{n=1}^N\subset G$, the condition (\ref{positivesemidefinitematrix}) is known as the positive-semidefiniteness of the matrix
\begin{gather}
\left(\begin{array}{cccc}
\phi\left(x_1^{-1}x_1\right) & \phi\left(x_1^{-1}x_2\right) & \ldots & \phi\left(x_1^{-1}x_N\right)\\
\phi\left(x_2^{-1}x_1\right) & \phi\left(x_2^{-1}x_2\right) & & \phi\left(x_2^{-1}x_N\right)\\
\vdots & & \ddots & \vdots\\
\phi\left(x_N^{-1}x_1\right) & \phi\left(x_N^{-1}x_2\right) & \ldots & \phi\left(x_N^{-1}x_N\right)\\
\end{array}\right).
\label{thismatrixissemidefinite}
\end{gather}

\noindent
To reiterate, a function $\phi$ is positive-semidefinite if all such matrices, regardless of the size $N$ and the sequence $(x_n)_{n=1}^N\subset G$, are positive-semidefinite. We can also express condition (\ref{positivesemidefinitematrix}) more explicitly: a function $\phi$ satisfies (\ref{positivesemidefinitematrix}) if for every $N\in\naturals,\ (x_n)_{n=1}^N\subset G$ and $(z_n)_{n=1}^N\subset \complex$ we have
\begin{gather}
\sum_{n=1}^N\sum_{m=1}^N\ \phi\left(x_n^{-1}x_m\right)\cdot \overline{z_n}\cdot z_m \geqslant 0.
\label{positivesemidefiniteequivalent}
\end{gather}

It will shortly become apparent that positive-semidefinite functions play a crucial role in our understanding of harmonic analysis on Gelfand pairs. We should therefore become acquainted with the properties of these functions. To this end we invoke the following result:

\begin{lemma}(comp. Lemma 5.1.8 in \cite{Dijk}, p. 54, Theorem 32.4 in \cite{HewittRoss2} or Proposition 8.4.2 in \cite{Wolf}, p. 165)\\
If $\phi\colon G\longrightarrow \complex$ is a positive-semidefinite function, then
\begin{enumerate}
	\item $\phi(e)$ is a real and nonnegative number,
	\item $\phi\left(x^{-1}\right) = \overline{\phi(x)}$ for every $x\in G$,
	\item $|\phi(x)| \leqslant \phi(e)$ for every $x\in G$,
	\item $\phi$ is continuous.
\end{enumerate}
\label{propertiesofpositivesemidefinite}
\end{lemma}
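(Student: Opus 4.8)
The plan is to extract (1)--(3) from the defining inequality (\ref{positivesemidefiniteequivalent}) fed with the smallest possible data ($N=1$ and $N=2$, together with a shrewd choice of the scalars $z_n$), and to deduce (4) from a single Cauchy--Schwarz-type estimate for $\phi$ that is itself just the $N=3$ instance of positive-semidefiniteness.

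The only linear-algebra fact needed is that a complex matrix $A$ with $z^{\dagger}Az\geqslant 0$ for every $z$ is necessarily Hermitian. \textit{For (1)} put $N=1$, $x_1=e$, $z_1=1$ in (\ref{positivesemidefiniteequivalent}): this reads $\phi(e)\geqslant 0$. \textit{For (2)}, with $N=2$ and $(x_1,x_2)=(e,x)$ the matrix (\ref{thismatrixissemidefinite}) is $\left(\begin{smallmatrix}\phi(e)&\phi(x)\\ \phi(x^{-1})&\phi(e)\end{smallmatrix}\right)$; being positive-semidefinite it is Hermitian, hence $\phi(x^{-1})=\overline{\phi(x)}$ (alternatively, substituting $(z_1,z_2)=(1,1)$ and $(1,i)$ shows that $\phi(x)+\phi(x^{-1})$ is real and $\phi(x)-\phi(x^{-1})$ is purely imaginary, which amounts to the same). \textit{For (3)}, using (2) the same $2\times 2$ matrix is $\left(\begin{smallmatrix}\phi(e)&\phi(x)\\ \overline{\phi(x)}&\phi(e)\end{smallmatrix}\right)$; substituting $(z_1,z_2)=\bigl(1,\,-t\,\overline{\phi(x)}/|\phi(x)|\bigr)$ with $t\in\reals$ (assuming $\phi(x)\neq 0$; otherwise there is nothing to prove) turns (\ref{positivesemidefiniteequivalent}) into the assertion that the real quadratic $\phi(e)\,t^{2}-2|\phi(x)|\,t+\phi(e)$ is $\geqslant 0$ for all $t$, so its discriminant is $\leqslant 0$, i.e. $|\phi(x)|^{2}\leqslant\phi(e)^{2}$; combined with (1) this is (3). (If $\phi(e)=0$ the same substitution forces $\phi(x)=0$ for all $x$, so $\phi\equiv 0$ and (3) is trivial.)

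Property (4) rests on the inequality
\[
|\phi(xh)-\phi(x)|^{2}\ \leqslant\ 2\,\phi(e)\,\bigl(\phi(e)-\Real\,\phi(h)\bigr),\qquad x,h\in G.
\]
Assuming $\phi(e)>0$ (else $\phi\equiv 0$) and $\phi(xh)\neq\phi(x)$, apply the $N=3$ case of (\ref{positivesemidefiniteequivalent}) to $(x_1,x_2,x_3)=(e,x,xh)$, reading off the entries of (\ref{thismatrixissemidefinite}) via (2), with $(z_1,z_2,z_3)=\bigl(t\,\omega,\,1,\,-1\bigr)$, $t\in\reals$, $\omega:=\dfrac{\phi(xh)-\phi(x)}{|\phi(xh)-\phi(x)|}$. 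The left-hand side of (\ref{positivesemidefiniteequivalent}) then simplifies to the real quadratic $\phi(e)\,t^{2}-2|\phi(xh)-\phi(x)|\,t+2\bigl(\phi(e)-\Real\,\phi(h)\bigr)$, and the nonpositivity of its discriminant is exactly the displayed bound. (Conceptually this is Cauchy--Schwarz in the GNS space attached to $\phi$, where $\|\xi_{xh}-\xi_x\|^{2}=2(\phi(e)-\Real\,\phi(h))$, $\|\xi_e\|^{2}=\phi(e)$ and $\phi(xh)-\phi(x)=\langle\xi_{xh}-\xi_x,\xi_e\rangle$.) Once the inequality is in hand, continuity of $\phi$ at an arbitrary $x\in G$ follows as soon as $\Real\,\phi(h)\to\phi(e)$ when $h\to e$ — that is, as soon as $\phi$ is continuous at the identity.

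I expect this reduction to continuity at $e$ to be the genuine obstacle. Positive-semidefiniteness alone does \emph{not} force continuity at $e$: on any non-discrete $G$ the indicator $\mathds{1}_{\{e\}}$ is positive-semidefinite and discontinuous. So statement (4) must draw on a regularity hypothesis carried by the cited references — either positive-semidefinite functions are continuous there by convention, or one additionally assumes $\phi$ measurable and then obtains continuity at $e$ from the standard convolution-regularization argument. In the uses of this lemma later in the paper $\phi$ is a spherical function, hence continuous at the outset, so the point never surfaces in context; everything else above is a short mechanical check once the test vectors are fixed.
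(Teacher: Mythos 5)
The paper does not actually prove Lemma \ref{propertiesofpositivesemidefinite}; it imports it wholesale from van Dijk, Hewitt--Ross and Wolf, so there is no in-paper argument to compare yours against. Your computations are the standard ones and they are correct: the $N=1$ and $N=2$ substitutions give (1) and (2) (either via Hermitianness of a matrix satisfying $z^{\dagger}Az\geqslant 0$ for all complex $z$, or via the $(1,1)$ and $(1,i)$ test vectors); the quadratic-in-$t$ discriminant argument gives (3), with the degenerate case $\phi(e)=0$ correctly forcing $\phi\equiv 0$; and the $N=3$ substitution with $(e,x,xh)$ and $(t\omega,1,-1)$ does yield
\begin{gather*}
|\phi(xh)-\phi(x)|^{2}\ \leqslant\ 2\,\phi(e)\,\bigl(\phi(e)-\Real\,\phi(h)\bigr),
\end{gather*}
which reduces continuity everywhere (indeed uniform continuity) to continuity at $e$.

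Your closing caveat is not a defect of your proof but a genuine issue with the lemma as stated: item (4) is false for a bare positive-semidefinite function. The indicator of $\{e\}$ on any non-discrete $G$ satisfies (\ref{positivesemidefiniteequivalent}) (the matrix (\ref{thismatrixissemidefinite}) is then a block-diagonal arrangement of all-ones blocks) yet is discontinuous at $e$. This is consistent with the cited sources: Hewitt--Ross 32.4 concludes only that a positive-definite function \emph{continuous at $e$} is uniformly continuous, while Wolf and van Dijk build continuity into the definition. So item (4) tacitly assumes either continuity at $e$ or measurability plus the convolution-regularization argument; since every $\phi$ to which the lemma is later applied in this paper is a spherical function and hence continuous by definition, nothing downstream is affected, but the lemma as printed should carry that hypothesis. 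Your proof, modulo that (correctly flagged) point, is complete.
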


To make further progress we take a compact subgroup $K$ of the locally compact (Hausdorff) group $G$ (we shall repeat this assumption ad nauseam) and consider the \textit{double coset space} (comp. \cite{Bechtell}, \mbox{p. 101}):
$$\KGK := \big\{KgK\ :\ g\in G\big\}.$$

\noindent
Let $\pi\colon G\longrightarrow \KGK$ be the projection defined by $\pi(g) := KgK.$ If 
\begin{itemize}
	\item $C_c(G)$ is the space of all continuous functions on $G$ with compact support, and
	\item $C_c(\KGK)$ is the space of all continuous functions on $\KGK$ with compact support
\end{itemize}

\noindent
then every function $F\in C_c(\KGK)$ determines a function $F\circ \pi\colon G\longrightarrow \complex$ belonging to the set of all \textit{bi$-K-$invariant} continuous functions on $G$ with compact support:
\begin{gather}
\bigg\{f\in C_c(G)\ :\ \forall_{\substack{k_1,k_2\in K\\ x\in G}}\ f(k_1xk_2) = f(x)\bigg\}.
\label{biKinvariantfunctions}
\end{gather}

\noindent
Furthermore, the map $F\mapsto F\circ \pi$ is a bijection, and thus we identify $C_c(\KGK)$ with the set (\ref{biKinvariantfunctions}). An analogous reasoning works for the space $C(\KGK)$ of continuous functions, the space $C_0(\KGK)$ of continuous functions which vanish at infinity and the space $L^1(\KGK)$ of integrable functions. If it turns out that $L^1(\KGK)$ is a commutative convolution algebra, the pair $(G,K)$ is called a \textit{Gelfand pair}.

The simplest instance of a Gelfand pair is $(G,\{e\})$ where $G$ is a locally compact \textit{abelian} group. A less trivial example is $(E(n),SO(n)),$ where $SO(n)$ is the special orthogonal group and $E(n)$ is the group of Euclidean motions on $\reals^n$ ($E(n)$ is in fact a semi-direct product of the translation group $T(n)$ and the orthogonal group $O(n)$). Other instances of Gelfand pairs include:
\begin{itemize}
	\item $(GL(n,\reals),O(n)),$ where $GL(n,\reals)$ is the general linear group on $\reals^n,$
	\item $(GL(n,\complex),U(n)),$ where $U(n)$ is the unitary group,
	\item $(O(n+k),O(n)\times O(k)),$
	\item $(SO(n+k),SO(n)\times SO(k)),$
	\item $(U(n+k),U(n)\times U(k)),$
	\item $(SU(n+k),SU(n)\times SU(k)),$ where $SU(n)$ is the special unitary group.
\end{itemize}

Given a Gelfand pair $(G,K)$, every function $\chi\colon C_c(\KGK)\longrightarrow \complex$ satisfying 
$$\forall_{f,g\in C_c(\KGK)}\ \chi(f\star g) = \chi(f)\cdot \chi(g),$$

\noindent
where $\star$ denotes the convolution in $C_c(\KGK),$ is called a \textit{character}. Furthermore, every continuous, bi$-K-$invariant function $\phi\colon G\longrightarrow \complex$ for which 
$$\forall_{f\in C_c(\KGK)}\ \chi_{\phi}(f) := \int_G\ f(x)\cdot \phi(x)\ dx$$

\noindent
is a nontrivial character, is called a \textit{spherical function}. These functions admit the following characterization:

\begin{thm}(comp. Propositions 6.1.5 and 6.1.6 in \cite{Dijk}, p. 77, Proposition 2.2 in \cite{Helgason}, p. 400 or Theorem 8.2.6 in \cite{Wolf}, p. 157)\\
The following conditions are equivalent:
\begin{itemize}
	\item $\phi\colon G\longrightarrow \complex$ is a spherical function.
	\item $\phi\colon G\longrightarrow \complex$ is a nonzero, continuous function such that 
	$$\forall_{x,y\in G}\ \int_K\ \phi(xky)\ dk =\phi(x)\cdot \phi(y),$$

	\noindent
	where $dk$ is the normalized Haar measure on $K$. 
	\item $\phi\colon G\longrightarrow \complex$ is a continuous, bi$-K-$invariant function with $\phi(e) = 1$ and for every $f\in C_c(\KGK)$ there exists a complex number 
	$$\lambda_f = \int_G\ f(x)\cdot \phi\left(x^{-1}\right)\ dx$$
	
	\noindent
	such that $f\star \phi = \lambda_f\cdot\phi.$
\end{itemize}
\label{characterizationofsphericalfunctions}
\end{thm}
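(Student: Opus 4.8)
The plan is to establish the three implications forming a cycle among the stated conditions, relying on the theory of Gelfand pairs and the definition of a character. First I would show that if $\phi$ is a spherical function (in the sense that $\chi_\phi$ is a nontrivial character), then $\phi(e)=1$ and $\phi$ satisfies the functional equation $\int_K \phi(xky)\,dk = \phi(x)\phi(y)$. To do this, one unwinds the multiplicativity $\chi_\phi(f\star g) = \chi_\phi(f)\chi_\phi(g)$: writing both sides as integrals over $G$ and using Fubini together with the bi-$K$-invariance of $f,g$ and the left-invariance of Haar measure, one obtains an integrated identity of the form $\int\int f(x)g(y)\big(\int_K \phi(xky)\,dk\big)\,dx\,dy = \int\int f(x)g(y)\phi(x)\phi(y)\,dx\,dy$; since this holds for all bi-$K$-invariant $f,g\in C_c$, and since both integrands are bi-$K$-invariant and continuous in $(x,y)$, one concludes the pointwise functional equation. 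Nontriviality of $\chi_\phi$ forces $\phi\not\equiv 0$, and then plugging $x=y=e$ (or more carefully using an approximate identity supported near $K$) into the functional equation gives $\phi(e)=\phi(e)^2$, hence $\phi(e)=1$ since $\phi(e)\neq 0$ (if $\phi(e)=0$ the functional equation with $y=e$ would force $\phi\equiv 0$).

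Next I would derive the third characterization from the second. Assuming $\phi$ is continuous, bi-$K$-invariant, nonzero, with $\phi(e)=1$ and the functional equation, I compute $(f\star\phi)(x) = \int_G f(y)\phi(y^{-1}x)\,dy$ for $f\in C_c(\KGK)$. The trick is to exploit bi-$K$-invariance of $f$ to insert an averaging over $K$: replacing $\phi(y^{-1}x)$ by $\int_K \phi(y^{-1}kx)\,dk$ is legitimate after averaging the $y$-variable, and then the functional equation (applied with the roles arranged as $\int_K \phi(y^{-1}kx)\,dk = \phi(y^{-1})\phi(x)$) collapses the convolution to $\big(\int_G f(y)\phi(y^{-1})\,dy\big)\phi(x) = \lambda_f \phi(x)$. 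This identifies $\lambda_f = \int_G f(y)\phi(y^{-1})\,dy$ and shows $\phi$ is an eigenfunction of every convolution operator by a bi-$K$-invariant function.

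Finally I would close the cycle by showing the third condition implies $\phi$ is a spherical function. Given $f\star\phi = \lambda_f\phi$ with $\lambda_f = \int_G f(x)\phi(x^{-1})\,dx$, one checks that $\chi_\phi(f) := \int_G f(x)\phi(x)\,dx$ is multiplicative: using $\phi(x^{-1}) = \overline{\phi(x)}$ type identities (or directly manipulating $\chi_\phi(f\star g)$ and substituting the eigenvalue relation), one gets $\chi_\phi(f\star g) = \lambda_{\check f}\,\chi_\phi(g)$ or similar, which after matching with $\chi_\phi(f)\chi_\phi(g)$ yields multiplicativity; here the commutativity of the convolution algebra $C_c(\KGK)$ — i.e., the Gelfand pair hypothesis — is what makes the bookkeeping consistent. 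Nontriviality follows from $\phi(e)=1\neq 0$ by choosing $f$ to be an approximate identity. The main obstacle I anticipate is the careful justification of the $K$-averaging manipulations: one must repeatedly use that integrating a bi-$K$-invariant function against $\phi$ is unchanged if $\phi$ is first averaged over left/right $K$-translates, and keeping track of which variable is being averaged (and invoking Fubini, continuity, and compactness of $K$ to justify interchanging integrals) is where the real care is needed; the algebraic skeleton of the equivalences is otherwise standard.
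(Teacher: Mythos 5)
The paper does not actually prove Theorem \ref{characterizationofsphericalfunctions}: it is imported from the literature (Propositions 6.1.5--6.1.6 in van Dijk, Proposition 2.2 in Helgason, Theorem 8.2.6 in Wolf), so there is no in-paper argument to compare against. Your cyclic scheme is exactly the standard proof found in those references: unwind multiplicativity of $\chi_\phi$ into a double integral, insert the $K$-average using bi-$K$-invariance of the test functions and translation invariance of Haar measure, and separate variables to obtain the functional equation; use the same averaging trick to collapse $f\star\phi$ to $\lambda_f\cdot\phi$; and finally read multiplicativity off the eigenvalue relation via $(f\star g)\star\phi = f\star(g\star\phi)$. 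On the whole the proposal is the right proof.

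Two loose ends should be tightened. First, the second condition of the theorem does not assume bi-$K$-invariance or $\phi(e)=1$; both must be \emph{derived} from the functional equation before you may ``exploit bi-$K$-invariance'' in the step from the second to the third condition. This is easy (replace $y$ by $k_0y$ and use invariance of $dk$ to get $\phi(x)\big(\phi(k_0y)-\phi(y)\big)=0$, then choose $x$ with $\phi(x)\neq 0$; setting $y=e$ afterwards gives $\phi(e)=1$), but it is a step, not a hypothesis. Second, in the passage from the third condition back to the first, the identity $\phi\left(x^{-1}\right)=\overline{\phi(x)}$ is not available: by Lemma \ref{propertiesofpositivesemidefinite} it holds for positive-semidefinite functions, and a general spherical function need not be positive-semidefinite. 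The correct mechanism is the one you mention only parenthetically: associativity gives $\lambda_{f\star g}\cdot\phi = f\star(g\star\phi)=\lambda_f\lambda_g\cdot\phi$, hence $f\mapsto\lambda_f$ is multiplicative and nonzero (take $f$ an approximate identity and use $\phi(e)=1$ with continuity of $\phi$); to pass from $\lambda_f=\int_G f(x)\phi\left(x^{-1}\right)dx$ to the character $\chi_\phi(f)=\int_G f(x)\phi(x)\,dx$ of the paper's definition, use $\chi_\phi(f)=\lambda_{\check f}$ with $\check f(x):=f\left(x^{-1}\right)$, together with the facts that $f\mapsto\check f$ preserves $C_c(\KGK)$ and reverses products (which is harmless since the values are scalars, or by commutativity of the algebra). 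With those repairs the argument is complete.
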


\subsection{Spherical transform}

The previous section concluded with a characterization of spherical functions. Currently, our objective is to employ these functions to describe the spherical transform (and its inverse), which is a counterpart of the prominent Fourier transform on locally compact abelian groups. To this end, we introduce the following notation:
\begin{itemize}
	\item $\Spher(G,K)$ denotes the set of all spherical functions on the Gelfand pair $(G,K),$
	\item $\Spher^b(G,K)$ denotes the set of all \textit{bounded} spherical functions on $(G,K),$ 
	\item $\Spher^+(G,K)$ denotes the set of all \textit{positive-semidefinite} spherical functions on $(G,K).$ 
\end{itemize}

\noindent
Let us remark that due to Lemma \ref{propertiesofpositivesemidefinite}, positive-semidefinite functions are automatically bounded, so $\Spher^+(G,K)\subset \Spher^b(G,K).$ Furthermore, since the Gelfand pair $(G,K)$ is usually understood from the context, we frequently simplify the notation $\Spher(G,K),\ \Spher^b(G,K)$ and $\Spher^+(G,K)$ to $\Spher,\ \Spher^b$ and $\Spher^+,$ respectively. 

\begin{defin}(comp. Definition 6.4.3 in \cite{Dijk}, p. 83 or Definition 9.2.1 in \cite{Wolf}, p. 184)\\
For every $f\in L^1(\KGK)$ the function $\widehat{f}\colon \Spher^b \longrightarrow \complex$ given by
$$\widehat{f}(\phi) := \int_G f(x)\cdot \phi\left(x^{-1}\right)\ dx$$

\noindent
is called a spherical transform. 
\end{defin}

As in \cite{Wolf}, p. 185 we topologize $\Spher^b$ (which is a maximal ideal space for the commutative Banach algebra $L^1(\KGK)$) with the weak topology from the family of maps $\big\{\widehat{f}\ :\ f\in L^1(\KGK)\big\}$ - this turns $\Spher^b$ into a locally compact Hausdorff space. In the sequel we will focus on the subspace $\Spher^+$ of $\Spher^b$, with the induced topology. It is remarkable that the induced weak topology on $\Spher^+$ coincides with the topology of uniform convergence on compact subsets of $G$ (comp. Proposition 6.4.2 in \cite{Dijk}, p. 83 or Theorem 3.31 in \cite{Folland}, p. 82).  

Let us define the function $\Gamma(f) := \widehat{f}$ for every $f\in L^1(\KGK).$ Since every spherical transform $\widehat{f}$ is a continuous function which vanishes at infinity (comp. Corollary 9.1.14 or Corollary 9.2.10 in \cite{Wolf}), we have
$$\Gamma\colon L^1(\KGK) \longrightarrow C_0(\Spher^b).$$

\noindent
This is a counterpart of the classical Riemann-Lebesgue lemma, a classical result in the theory of Fourier transform (comp. Lemma 3.3.7 in \cite{Deitmar}, p. 47 or Theorem 1.7 in \cite{Katznelson}, p. 136). Furthermore, we define 
$$\X_1 := B(\KGK)\cap L^1(\KGK),$$

\noindent
where $B(\KGK)$ is the set of all linear combinations of positive-semidefinite and bi$-K-$invariant functions. Theorem 9.4.1 in \cite{Wolf}, p. 191 asserts the existence of a measure $\widehat{\mu}$ on $\Spher^+$ such that for every $f\in \X$ we have $\widehat{f}\in L^1(\Spher^+,\widehat{\mu})$ and 
\begin{gather}
\forall_{x\in G}\ f(x) = \int_{\Spher^+} \widehat{f}(\phi)\cdot \phi(x)\ d\widehat{\mu}(\phi).
\label{inversesphericaltransform}
\end{gather}

\noindent
In other words $\Gamma$ is a bijection between $\X\subset L^1(\KGK)$ and the image $\Gamma(\X)\subset L^1(\Spher^+,\widehat{\mu}).$ This insight is vital while proving the counterpart of the Plancherel formula:

\begin{thm}(comp. Theorem 6.4.6 in \cite{Dijk}, p. 85 or Theorem 9.5.1 in \cite{Wolf}, p. 193)\\
If $f\in L^1(\KGK) \cap L^2(\KGK)$ then $\widehat{f}\in L^2(\Spher^+,\widehat{\mu})$ and $\|f\|_2 = \|\widehat{f}\|_2.$
\label{Planchereltheorem}
\end{thm}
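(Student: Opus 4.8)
The plan is to mimic the classical proof of the Plancherel theorem for the Fourier transform, pushing everything through the inversion formula (\ref{inversesphericaltransform}) together with the structure of the convolution algebra $L^1(\KGK)$. First I would reduce the claim to functions lying in the ``good'' subspace $\X$ where the inversion formula is available. Given $f\in L^1(\KGK)\cap L^2(\KGK)$, form the auxiliary function $g := f\star \widetilde{f}$, where $\widetilde{f}(x) := \overline{f(x^{-1})}$; one checks that $\widetilde{f}$ is again bi$-K$-invariant and that $g$ is continuous (being a convolution of $L^1\cap L^2$ functions in this setting), bi$-K$-invariant, and positive-semidefinite — the positive-semidefiniteness is exactly the standard computation $\sum_{n,m} g(x_n^{-1}x_m)\overline{z_n}z_m = \int_G \big|\sum_n z_n f(x\mapsto \cdot)\big|^2 \geqslant 0$. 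Hence $g\in B(\KGK)\cap L^1(\KGK) = \X_1$, so the inversion formula applies to $g$.

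Next I would evaluate $g$ at the identity in two ways. On one hand, $g(e) = (f\star\widetilde f)(e) = \int_G f(x)\overline{f(x)}\,dx = \|f\|_2^2$, using bi$-K$-invariance and unimodularity (a Gelfand pair forces $G$ to be unimodular, so no modular function appears). On the other hand, the inversion formula gives $g(e) = \int_{\Spher^+} \widehat{g}(\phi)\,d\widehat\mu(\phi)$, since $\phi(e)=1$ for spherical functions. The remaining ingredient is the multiplicativity of the spherical transform on $L^1(\KGK)$, namely $\widehat{f\star h} = \widehat f\cdot \widehat h$, and its interaction with the involution: $\widehat{\widetilde f}(\phi) = \overline{\widehat f(\phi)}$, which follows from property (2) in Lemma \ref{propertiesofpositivesemidefinite} — $\phi(x^{-1}) = \overline{\phi(x)}$ for positive-semidefinite $\phi$, and the relevant $\phi$ here range over $\Spher^+$. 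Combining these, $\widehat g = \widehat f\cdot\overline{\widehat f} = |\widehat f|^2$, so $g(e) = \int_{\Spher^+} |\widehat f(\phi)|^2\,d\widehat\mu(\phi) = \|\widehat f\|_{L^2(\Spher^+,\widehat\mu)}^2$. Equating the two expressions for $g(e)$ yields $\|f\|_2^2 = \|\widehat f\|_2^2$, which in particular shows $\widehat f\in L^2(\Spher^+,\widehat\mu)$.

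I expect the main obstacle to be the technical verification that $g = f\star\widetilde f$ genuinely lands in $\X_1$ and that all the manipulations at $e$ are legitimate. Specifically: (i) showing $g$ is continuous and that pointwise evaluation $g(e)$ makes sense — here one uses that $f\in L^2$ so that $f\star\widetilde f$ is, up to the usual argument, continuous and bounded; (ii) confirming $\widehat{\widetilde f}(\phi) = \overline{\widehat f(\phi)}$ for $\widehat\mu$-a.e.\ $\phi$, which is where one must be careful that the measure $\widehat\mu$ is supported on $\Spher^+$ (positive-semidefinite spherical functions) and not merely on $\Spher^b$; (iii) justifying $g\in L^1$ so that the inversion formula (\ref{inversesphericaltransform}) applies, which is immediate from $L^1\star L^1\subset L^1$. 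A clean alternative to (i), if one prefers to avoid continuity subtleties, is to first prove the identity for $f\in C_c(\KGK)$ — where everything is manifestly fine — and then extend to all of $L^1(\KGK)\cap L^2(\KGK)$ by density in the $L^2$-norm, using that $\Gamma$ is $L^2$-isometric on the dense subspace to conclude it extends to an isometry, hence the identity persists on the whole intersection.
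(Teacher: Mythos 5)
Your proof is correct, but note that the paper itself contains no proof of this statement: Theorem \ref{Planchereltheorem} is simply imported from the literature (van Dijk, Theorem 6.4.6; Wolf, Theorem 9.5.1). Your argument --- form $g=f\star\widetilde{f}$, check that it is continuous, bi$-K-$invariant, positive-semidefinite and integrable so that $g\in\X_1$, apply the inversion formula (\ref{inversesphericaltransform}) at $e$, and use $\widehat{g}=|\widehat{f}|^2$ (which relies on $\phi\left(x^{-1}\right)=\overline{\phi(x)}$ for $\phi\in\Spher^+$ and on unimodularity of $G$) --- is precisely the standard proof given in those references, so there is nothing to compare beyond saying that you have supplied the argument the paper chose to cite rather than reproduce.
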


As in the classical case (comp. Theorem 3.5.2 in \cite{Deitmar}, p. 53), by Theorem 1.7 in \cite{ReedSimon}, p. 9 we can extend $\Gamma$ (a linear and bounded operator defined on a dense set $L^1(\KGK)\cap L^2(\KGK)$ in $L^2(\KGK)$) to an isometric isomorphism between $L^2(\KGK)$ and $L^2(\Spher^+,\widehat{\mu}).$ In order to simplify the notation, we still denote this extension by $\Gamma.$ The map $\Gamma^{-1}$ is called the \textit{inverse spherical transform} and we often write $\widecheck{F} := \Gamma^{-1}(F)$ for $F\in L^2(\Spher^+,\widehat{\mu}).$

As a corollary to Plancherel theorem (Theorem \ref{Planchereltheorem}) we have the following result:

\begin{cor}(Corollary 9.5.2 in \cite{Wolf}, p. 194)\\
If $f,g \in L^2(\KGK)$, then
$$\langle f|g \rangle_{L^2(\KGK)} = \langle \widehat{f}|\widehat{g} \rangle_{L^2(\Spher^+,\widehat{\mu})}.$$
\label{corollaryofPlancherel}
\end{cor}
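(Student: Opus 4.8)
The statement to prove is Corollary \ref{corollaryofPlancherel}: the polarization identity for the spherical transform, i.e. that $\Gamma$ preserves inner products, not just norms.

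The plan is to deduce this from Theorem \ref{Planchereltheorem} (the Plancherel identity $\|f\|_2 = \|\widehat f\|_2$) by a standard polarization argument, first on the dense subspace $L^1(\KGK)\cap L^2(\KGK)$ and then extending by continuity to all of $L^2(\KGK)$. The key observation is that the spherical transform $\Gamma$, by the discussion preceding the corollary, has been extended to an isometric isomorphism $\Gamma\colon L^2(\KGK)\to L^2(\Spher^+,\widehat\mu)$; isometry gives $\|\Gamma f\|_2 = \|f\|_2$ for every $f\in L^2(\KGK)$, and linearity gives $\Gamma(f+g) = \widehat f + \widehat g$ etc.

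First I would recall the complex polarization identity: for any two vectors $u,v$ in a complex Hilbert space,
$$\langle u|v\rangle = \tfrac14\sum_{k=0}^{3} i^k\,\|u + i^k v\|^2.$$
Applying this in $L^2(\KGK)$ to $u=f$, $v=g$, and in $L^2(\Spher^+,\widehat\mu)$ to $u=\widehat f$, $v=\widehat g$, I would then compare the two expressions term by term. For each $k$, the vector $f + i^k g$ again lies in $L^2(\KGK)$, so by the extended Plancherel theorem $\|f + i^k g\|_{L^2(\KGK)} = \|\Gamma(f+i^k g)\|_{L^2(\Spher^+,\widehat\mu)}$; and by linearity of $\Gamma$ we have $\Gamma(f+i^k g) = \widehat f + i^k \widehat g$. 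Substituting these four equalities into the polarization sum for $\langle f|g\rangle_{L^2(\KGK)}$ turns it, term by term, into the polarization sum for $\langle \widehat f|\widehat g\rangle_{L^2(\Spher^+,\widehat\mu)}$, which is exactly the claimed identity. One only has to be careful about the convention for the inner product (conjugate-linearity in the first versus the second slot); I would match the sign/conjugation convention to the one used elsewhere in the paper, which only affects which of $i^k$ or $i^{-k}$ appears and does not change the argument.

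There is essentially no serious obstacle here — the corollary is a purely formal consequence of the norm identity plus linearity, which is why it is stated as a corollary. The only points requiring a word of care are: (i) making sure one invokes the \emph{extended} operator $\Gamma$ on $L^2(\KGK)$ rather than only the spherical transform on $L^1\cap L^2$, so that all four intermediate vectors $f+i^kg$ are legitimately in the domain; and (ii) fixing the inner-product convention consistently. Alternatively, if one preferred to avoid even mentioning the extension, one could first prove the identity for $f,g\in L^1(\KGK)\cap L^2(\KGK)$ by the same polarization computation using Theorem \ref{Planchereltheorem} directly, and then pass to general $f,g\in L^2(\KGK)$ by approximating in $L^2$-norm and using continuity of both inner products together with boundedness (in fact isometry) of $\Gamma$; both routes are routine.
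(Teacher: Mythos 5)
Your polarization argument is correct and is exactly the intended derivation: the paper gives no explicit proof, simply citing Wolf and presenting the identity ``as a corollary to the Plancherel theorem,'' which is precisely the passage from the norm identity to the inner-product identity via polarization and the linearity of the extended operator $\Gamma$ on $L^2(\KGK)$. Your cautionary remarks about using the extension of $\Gamma$ and fixing the inner-product convention are sensible but do not change the substance.
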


In the sequel we abbreviate both inner products $\langle \cdot|\cdot \rangle_{L^2(\KGK)}$ and $\langle \cdot|\cdot \rangle_{L^2(\Spher^+,\widehat{\mu})}$ to $\langle \cdot|\cdot \rangle_2$. The distinction between the two should be obvious from the context.

\subsection{Hausdorff-Young inequality and its inverse}

The final section of the current chapter opens with a renowned \textit{Riesz-Thorin interpolation theorem}:

\begin{thm}(comp. Theorem 6.27 in \cite{FollandRealAnalysis}, p. 200)\\
Let $(X,\Sigma_X,\mu_X)$ and $(Y,\Sigma_Y,\mu_Y)$ be measure spaces and let $p_0,p_1,q_0,q_1\in [1,\infty]$. If $q_0=q_1 = \infty,$ suppose also that $\mu_Y$ is semifinite. For $0 < t < 1$, define $p_t$ and $q_t$ by
\begin{gather*}
\frac{1}{p_t} := \frac{1-t}{p_0} + \frac{t}{p_1} \hspace{0.4cm}\text{and}\hspace{0.4cm} \frac{1}{q_t} := \frac{1-t}{q_0} + \frac{t}{q_1}.
\end{gather*}

\noindent
If $T$ is a linear map from $L^{p_0}(X,\mu_X) + L^{p_1}(X,\mu_X)$ to $L^{q_0}(X,\mu_Y) + L^{q_1}(X,\mu_Y)$ such that there exist constants $M_0, M_1 >0$ with
\begin{equation*}
\begin{split}
\|Tf\|_{q_0} &\leqslant M_0\cdot \|f\|_{p_0} \hspace{0.4cm}\text{for every } f\in L^{p_0}(X,\mu_X),\text{ and}\\
\|Tf\|_{q_1} &\leqslant M_1\cdot \|f\|_{p_1} \hspace{0.4cm}\text{for every } f\in L^{p_1}(X,\mu_X),
\end{split}
\end{equation*}

\noindent
then 
\begin{gather*}
\|Tf\|_{q_t} \leqslant M_0^{1-t}M_1^t\cdot \|f\|_{p_t}\hspace{0.4cm}\text{for every } f\in L^{p_t}(X,\mu_X),\ t\in (0,1).
\end{gather*}
\label{RieszThorin}
\end{thm}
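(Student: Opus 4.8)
The plan is to carry out the classical complex-interpolation argument resting on the Hadamard three-lines lemma. First I would reduce the assertion: by homogeneity it suffices to treat $f$ with $\|f\|_{p_t} = 1$, and by density it is enough to take $f$ a simple function supported on a set of finite $\mu_X$-measure (when $p_t = \infty$ one necessarily has $p_0 = p_1 = \infty$ and the construction below degenerates to $f_z \equiv f$, so this case needs no density step). Using the duality $\big(L^{q_t}(X,\mu_Y)\big)^{*} = L^{q_t'}(X,\mu_Y)$ with $1/q_t + 1/q_t' = 1$ — which remains valid in the borderline situation $q_0 = q_1 = \infty$ precisely because $\mu_Y$ is assumed semifinite — the claimed bound is reduced to showing
$$\Big|\int_X (Tf)\, g\ d\mu_Y\Big| \leqslant M_0^{1-t} M_1^{t}$$
for every simple $g$ supported on a finite-measure set with $\|g\|_{q_t'} = 1$. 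Writing $f$ and $g$ in polar form, $f = \sum_{j} e^{i\theta_j} |a_j|\, \chi_{A_j}$ and $g = \sum_{k} e^{i\psi_k} |b_k|\, \chi_{B_k}$ with the $A_j$ (resp. $B_k$) pairwise disjoint of finite measure, sets the stage.

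Next I would introduce the entire functions $\alpha(z) = \tfrac{1-z}{p_0} + \tfrac{z}{p_1}$ and $\beta(z) = \tfrac{1-z}{q_0'} + \tfrac{z}{q_1'}$, note that $\alpha(t) = 1/p_t$ and $\beta(t) = 1/q_t'$, and define the analytic families
$$f_z := \sum_{j} e^{i\theta_j} |a_j|^{\alpha(z)/\alpha(t)}\, \chi_{A_j}, \qquad g_z := \sum_{k} e^{i\psi_k} |b_k|^{\beta(z)/\beta(t)}\, \chi_{B_k}$$
(interpreted in the obvious way when an exponent denominator vanishes, so that e.g. $f_z \equiv f$ if $p_0 = p_1$). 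Then
$$F(z) := \int_X (T f_z)\, g_z\ d\mu_Y = \sum_{j,k} c_{jk}\, |a_j|^{\alpha(z)/\alpha(t)}\, |b_k|^{\beta(z)/\beta(t)},$$
where $c_{jk} := \int_{B_k} e^{i(\theta_j + \psi_k)}\, (T\chi_{A_j})\ d\mu_Y$; being a finite sum of exponentials in $z$, $F$ is entire, and it is bounded on the closed strip $\overline{S} = \{z\in\complex : 0 \leqslant \Real z \leqslant 1\}$ because each $\Real\big(\alpha(z)/\alpha(t)\big)$ and $\Real\big(\beta(z)/\beta(t)\big)$ stays in a bounded interval there. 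Finally $F(t) = \int_X (Tf)\, g\ d\mu_Y$.

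The two boundary estimates are then routine bookkeeping with the exponents. On the line $\Real z = 0$ one has $\Real\big(\alpha(iy)/\alpha(t)\big) = p_t/p_0$, hence $|f_{iy}(x)| = |f(x)|^{p_t/p_0}$ pointwise and $\|f_{iy}\|_{p_0}^{p_0} = \|f\|_{p_t}^{p_t} = 1$; likewise $\|g_{iy}\|_{q_0'} = 1$; so by Hölder's inequality and the first hypothesis, $|F(iy)| \leqslant \|Tf_{iy}\|_{q_0}\, \|g_{iy}\|_{q_0'} \leqslant M_0\, \|f_{iy}\|_{p_0} = M_0$. Symmetrically, $|F(1+iy)| \leqslant M_1$ on $\Real z = 1$. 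The Hadamard three-lines lemma applied to the bounded analytic $F$ on $\overline{S}$ now gives $|F(t)| \leqslant M_0^{1-t} M_1^{t}$; taking the supremum over admissible $g$ and rescaling in $f$ yields $\|Tf\|_{q_t} \leqslant M_0^{1-t} M_1^{t}\, \|f\|_{p_t}$, first for simple $f$ and then, by a density and Fatou argument, for every $f \in L^{p_t}(X,\mu_X)$.

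The main obstacle I anticipate is not any single computation but the disciplined treatment of the degenerate exponent configurations: when some of $p_0, p_1, q_0', q_1'$ equal $1$ or $\infty$ one must read the powers $\alpha(z)/\alpha(t)$, $\beta(z)/\beta(t)$ correctly (including the conventions for $|a_j|^{0}$ and for vanishing coefficients), check that $F$ still stays bounded on the strip so that the three-lines lemma genuinely applies, and invoke the semifiniteness of $\mu_Y$ at exactly the point where the duality with $L^{\infty}$ would otherwise fail, namely when $q_0 = q_1 = \infty$. Everything else — the analyticity of $F$, the two Hölder estimates on the boundary lines, the three-lines lemma itself (the standard Phragmén–Lindelöf-type maximum principle on a strip), and the closing density argument — is mechanical once the polar decompositions and the auxiliary linear functions $\alpha$ and $\beta$ are in place.
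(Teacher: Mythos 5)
The paper does not prove this statement at all --- it is quoted verbatim as Theorem 6.27 from Folland's \emph{Real Analysis} and used as a black box --- and your proposal is precisely the standard complex-interpolation proof given in that cited source: reduction to simple functions, duality with $L^{q_t'}$ (semifiniteness entering exactly where you say, for the $q_0=q_1=\infty$ case), the analytic families $f_z$, $g_z$ built from $\alpha(z)$ and $\beta(z)$, the two boundary estimates via H\"older, and the Hadamard three-lines lemma. The sketch is correct, including its handling of the degenerate exponent configurations, so there is nothing to compare against in the paper itself.
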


Riesz-Thorin interpolation theorem is a rather advanced tool and one of its corollaries is the aforementioned \textit{Hausdorff-Young inequality}:

\begin{thm}(comp. Theorem 31.20 in \cite{HewittRoss2}, p. 226 or \cite{Wolf}, p. 200)\\
If $p\in [1,2]$ then for every $f\in L^p(\KGK)$ the inequality $\|\widehat{f}\|_{p'}\leqslant \|f\|_p$ holds ($p'$ stands for the H\"{o}lder conjugate of $p$). 
\label{HausdorffYoung}
\end{thm}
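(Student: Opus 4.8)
The plan is to derive the Hausdorff--Young inequality for the spherical transform as a direct application of the Riesz--Thorin interpolation theorem (Theorem \ref{RieszThorin}), by interpolating between the two endpoint cases $p=1$ and $p=2$ that we have already established. Concretely, I would take $X = \KGK$ equipped with the restricted Haar measure, $Y = \Spher^+$ equipped with the Plancherel measure $\widehat{\mu}$, and $T = \Gamma$ the spherical transform. The endpoint exponents are $p_0 = 1$, $q_0 = \infty$ and $p_1 = 2$, $q_1 = 2$; note that when $p = p_t$ for $t \in (0,1)$ one computes $\frac{1}{p_t} = (1-t) + \frac{t}{2} = 1 - \frac{t}{2}$, so $p_t$ ranges over $(1,2)$, and $\frac{1}{q_t} = \frac{t}{2} = 1 - \frac{1}{p_t}$, i.e.\ $q_t = p_t'$ is exactly the H\"older conjugate. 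The boundary case $p = 1$ (hence $p' = \infty$) must be checked separately but is immediate, and $p = 2$ is Plancherel.

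First I would dispose of the endpoint $p = 1$: for $f \in L^1(\KGK)$ and any $\phi \in \Spher^+$, the defining formula gives $|\widehat{f}(\phi)| \leqslant \int_G |f(x)|\,|\phi(x^{-1})|\,dx \leqslant \|f\|_1$, since every $\phi \in \Spher^+$ is positive-semidefinite and hence bounded by $\phi(e) = 1$ (Lemma \ref{propertiesofpositivesemidefinite}, items 3 and the normalization in Theorem \ref{characterizationofsphericalfunctions}). Taking the supremum over $\phi$ yields $\|\widehat{f}\|_\infty \leqslant \|f\|_1$, so $M_0 = 1$. Second, the endpoint $p = 2$ is precisely the Plancherel theorem (Theorem \ref{Planchereltheorem}), extended to all of $L^2(\KGK)$ as recorded just after that theorem: $\|\widehat{f}\|_2 = \|f\|_2$, so $M_1 = 1$. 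Third, to invoke Riesz--Thorin I must verify that $\Gamma$ is (or extends to) a single linear map defined on $L^1(\KGK) + L^2(\KGK)$ and landing in $L^\infty(\Spher^+,\widehat{\mu}) + L^2(\Spher^+,\widehat{\mu})$, and that the two definitions (integral formula on $L^1$, Plancherel extension on $L^2$) agree on the dense intersection $L^1 \cap L^2$ — this last point is exactly how the $L^2$ extension was constructed, so it is automatic. Since $q_0 = q_1 = \infty$ does not occur (here $q_1 = 2$), no semifiniteness hypothesis on $\widehat{\mu}$ is needed. Then Riesz--Thorin gives $\|\widehat{f}\|_{p_t} \leqslant 1^{1-t} \cdot 1^{t} \cdot \|f\|_{p_t} = \|f\|_{p_t}$ for all $t \in (0,1)$, which together with the already-treated endpoints $p \in \{1,2\}$ covers all $p \in [1,2]$.

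The main obstacle is not any hard estimate but a bookkeeping subtlety: one must make sure that the $L^2$-theory object $\Gamma(f)$ — defined abstractly via density as an isometric isomorphism onto $L^2(\Spher^+,\widehat{\mu})$ — genuinely coincides with the concrete integral transform $\widehat{f}$ when $f \in L^1 \cap L^2$, and that the codomain measure space and the sigma-algebra on $\Spher^+$ are the same in both the $L^\infty$ and the $L^2$ settings (the weak topology making $\Spher^+$ locally compact Hausdorff, with $\widehat{\mu}$ its Plancherel measure). Once this identification is in hand, the hypotheses of Theorem \ref{RieszThorin} are satisfied verbatim with $M_0 = M_1 = 1$, and the conclusion follows. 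A minor point worth a sentence in the write-up: for $p$ strictly between $1$ and $2$, every $f \in L^p(\KGK)$ indeed decomposes as a sum of an $L^1$ and an $L^2$ function (split $f = f\mathbf{1}_{\{|f| > 1\}} + f\mathbf{1}_{\{|f| \leqslant 1\}}$), so $\Gamma f$ is well defined and the interpolated bound applies to it.
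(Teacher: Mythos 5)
Your proposal is correct and follows exactly the route the paper intends: the paper gives no proof of Theorem \ref{HausdorffYoung} but explicitly presents it as a corollary of the Riesz--Thorin theorem (Theorem \ref{RieszThorin}), interpolating between the trivial $L^1\to L^\infty$ bound (using $|\phi(x)|\leqslant\phi(e)=1$ from Lemma \ref{propertiesofpositivesemidefinite}) and the Plancherel identity (Theorem \ref{Planchereltheorem}). Your additional care about the agreement of the integral transform and its $L^2$-extension on $L^1\cap L^2$, and the decomposition of $L^{p}$ into $L^1+L^2$, is exactly the right bookkeeping and matches the standard argument in the cited sources.
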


In the sequel, we will also need the \textit{inverse Hausdorff-Young inequality}, which we present below. We take the liberty of providing a rather short proof as it is far less known than the proof of the ``classical'' Hausdorff-Young inequality.

\begin{thm}(comp. Theorem 31.24 in \cite{HewittRoss2}, p. 229 or \cite{Wolf}, p. 200)\\
If $p\in (1,2]$ then for every $f\in L^2(\KGK)$ the inequality $\|f\|_{p'}\leqslant \|\widehat{f}\|_p$ holds. 
\label{inverseHausdorffYoung}
\end{thm}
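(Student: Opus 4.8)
The plan is to deduce the inverse inequality from the Hausdorff--Young inequality (Theorem \ref{HausdorffYoung}) together with the Parseval identity (Corollary \ref{corollaryofPlancherel}) by a duality argument, exploiting the fact that $p'\in[2,\infty)$ is \emph{finite} (this is precisely where the hypothesis $p>1$ is used). Fix $f\in L^2(\KGK)$; if $\widehat{f}\notin L^p(\Spher^+,\widehat{\mu})$ the right-hand side is $+\infty$ and there is nothing to prove, so assume $\widehat{f}\in L^p(\Spher^+,\widehat{\mu})$. Since $1<p'<\infty$, the $L^{p'}$-norm can be recovered by testing against continuous, compactly supported (equivalently, bi$-K-$invariant with compact support) functions:
$$\|f\|_{p'} = \sup\Big\{\ |\langle f\,|\,g\rangle_2|\ :\ g\in C_c(\KGK),\ \|g\|_p\leqslant 1\ \Big\},$$
where one inequality is H\"older and the other follows because a functional bounded on the dense subspace $C_c(\KGK)\subset L^p(\KGK)$ extends to an element of $(L^p(\KGK))^* = L^{p'}(\KGK)$, which must coincide with $f$ almost everywhere. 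The point of restricting to $g\in C_c(\KGK)$ is that every such $g$ lies in $L^p(\KGK)\cap L^2(\KGK)$, which is exactly what allows the following two steps to be applied at once.

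Next, for such a $g$ I would pass to the spherical side via Corollary \ref{corollaryofPlancherel}, namely $\langle f\,|\,g\rangle_2 = \langle \widehat{f}\,|\,\widehat{g}\rangle_2$, then estimate the latter by H\"older on $(\Spher^+,\widehat{\mu})$ with the conjugate exponents $p$ and $p'$, and finally apply Theorem \ref{HausdorffYoung} (admissible since $p\in[1,2]$ and $g\in L^p(\KGK)$) to bound the factor $\|\widehat{g}\|_{p'}$:
$$|\langle f\,|\,g\rangle_2| = |\langle \widehat{f}\,|\,\widehat{g}\rangle_2| \leqslant \|\widehat{f}\|_p\cdot\|\widehat{g}\|_{p'} \leqslant \|\widehat{f}\|_p\cdot\|g\|_p \leqslant \|\widehat{f}\|_p.$$
Taking the supremum over all admissible $g$ gives $\|f\|_{p'}\leqslant\|\widehat{f}\|_p$, which is the claim.

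I expect the only genuinely delicate point to be the duality formula for $\|f\|_{p'}$ — specifically, justifying that the supremum over $C_c(\KGK)$ already equals the $L^{p'}$-norm (including the case in which it is infinite) without presupposing that $f\in L^{p'}(\KGK)$. This rests on the density of $C_c(\KGK)$ in $L^p(\KGK)$ for $p<\infty$ and on the identification $(L^p(\KGK))^* = L^{p'}(\KGK)$, both standard for a Radon measure on a locally compact Hausdorff space. As an alternative one could interpolate the inverse spherical transform directly between the elementary bound $L^1(\Spher^+,\widehat{\mu})\to L^\infty(\KGK)$ — valid because $|\phi(x)|\leqslant\phi(e)=1$ for every $\phi\in\Spher^+$ by Lemma \ref{propertiesofpositivesemidefinite} — and the Plancherel isometry $L^2\to L^2$, using Theorem \ref{RieszThorin}; but that route additionally requires matching the interpolated operator with the $L^2$-inverse transform on the overlap of their domains, which is fussier than the duality argument above.
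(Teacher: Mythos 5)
Your proposal is correct and follows essentially the same route as the paper: both reduce to the case $\widehat{f}\in L^p(\Spher^+,\widehat{\mu})$, test $f$ against a dense class of functions lying in $L^p(\KGK)\cap L^2(\KGK)$, pass to the spherical side via Corollary \ref{corollaryofPlancherel}, and combine H\"older with Theorem \ref{HausdorffYoung} to bound the resulting functional by $\|\widehat{f}\|_p$, concluding by the identification $(L^p)^*=L^{p'}$. The paper merely phrases the final step as an explicit extension of the functional followed by the Riesz representation theorem rather than as the duality formula for the $L^{p'}$-norm, which is the same argument.
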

\begin{proof}
Firstly, if $\|\widehat{f}\|_p = \infty$ then we are immediately done. Thus without loss of generality, we suppose that $\widehat{f} \in L^p(\Spher^+,\widehat{\mu}).$

We start off with an observation that for every function $g \in L^2(\KGK)\cap L^p(\KGK)$ we have
\begin{gather*}
\left|\int_G\ g(x)\cdot\overline{f(x)}\ dx\right| = \left|\langle g|f \rangle_2\right| \stackrel{\text{Corollary } \ref{corollaryofPlancherel}}{=} \left|\langle \widehat{g}|\widehat{f} \rangle_2\right| \stackrel{\text{H\"older ineq.}}{\leqslant} \|\widehat{g}\|_{p'}\cdot \|\widehat{f}\|_p \stackrel{\text{Theorem } \ref{HausdorffYoung}}{\leqslant} \|g\|_p \cdot \|\widehat{f}\|_p.
\end{gather*}  

\noindent
Consequently, the map $g\mapsto \int_G\ g(x)\cdot \overline{f(x)}\ dx$ is a linear and bounded functional on $L^2(\KGK)\cap L^p(\KGK)$. Since $L^2(\KGK)\cap L^p(\KGK)$ is dense in $L^p(\KGK)$ we can extend this map to a linear and bounded functional $\Phi\colon L^p(\KGK)\longrightarrow \complex$ such that $\|\Phi\|\leqslant \|\widehat{f}\|_p.$ By Riesz representation theorem (comp. \cite{Brezis}, Theorem 4.11, p. 97) there exists a function $h\in L^{p'}(\KGK)$ such that 
$$\forall_{g\in L^p(\KGK)}\ \Phi(g) = \int_G\ g(x)\cdot h(x)\ dx \hspace{0.4cm}\text{and}\hspace{0.4cm} \|h\|_{p'} = \|\Phi\|.$$

\noindent
It follows that $\overline{f}\equiv h$ almost everywhere. Lastly, we have
$$\|f\|_{p'} = \|h\|_{p'} = \|\Phi\| \leqslant \|\widehat{f}\|_p,$$

\noindent
which concludes the proof.
\end{proof}

\section{Sobolev spaces}
\label{chapter:Sobolevspaces}

After the revision of harmonic analysis on Gelfand pairs in the previous chapter, we are in position to define Sobolev spaces on Gelfand pair $(G,K)$. Although Sobolev spaces are frequently introduced via the notion of a weak derivative, the expression ``differentiation on a group'' may be meaningless, so we must resort to a different approach than suggested in \cite{Brezis}, p. 202 or \cite{RenardyRogers}, p. 204 (to be fair, the latter discusses the characterization of Sobolev spaces via the Fourier transform a couple of pages further). We pursue a path inspired by the works of G\'{o}rka, Kostrzewa, Reyes (comp. \cite{GorkaKostrzewaReyes, GorkaKostrzewaReyes2, GorkaReyes}) as well as Grafakos (comp. \cite{Grafakos}, p. 13), Malliavin (comp. \cite{Malliavin}, p. 142) or Triebel (comp. \cite{Triebel}, p. 3).

\begin{defin}
For a measurable function $\gamma\colon \Spher^+\longrightarrow \reals_+$ and $s\in \reals_+$, the set 
\begin{gather}
H^s_{\gamma}(\KGK) := \bigg\{f\in L^2(\KGK)\ :\ \int_{\Spher^+}\ \left(1+\gamma(\phi)^2\right)^s\cdot |\widehat{f}(\phi)|^2\ d\widehat{\mu}(\phi) < \infty\bigg\}
\label{sobspace}
\end{gather}

\noindent
is called a Sobolev space. 
\label{Sobolevspacedefinition}
\end{defin}

Let us immediately explain the reason why the above definition encompasses the definitions encountered in the literature: 
\begin{itemize}
	\item A Sobolev space (with weight $\gamma$) on $\reals$ can be defined as 
\begin{gather}
H^s_{\gamma}(\reals) = \bigg\{f\in L^2(\reals)\ :\ \int_{\reals}\ \left(1+\gamma(y)^2\right)^s\cdot |\widehat{f}(y)|^2\ dy < \infty\bigg\}.
\label{sobolevonreals}
\end{gather}

\noindent
If $\gamma(y)= |y|^p,\ p\in\reals_+$ and $s=1$, then (\ref{sobolevonreals}) is exactly the Definition 7.14 on page 208 in \cite{RenardyRogers}. A similar approach is presented in \cite{Taylor} on page 271. This is perfectly compatible with Definition \ref{Sobolevspacedefinition}: if $G = \reals$ and $K = \{0\}$, then (comp. \cite{Dijk}, p. 77) we have $\Spher = \left\{x\mapsto e^{\lambda x}\ :\ \lambda \in\complex\right\}.$ Furthermore, if $\phi(x) = e^{\lambda x}$ is an element of $\Spher^+$ then (\ref{positivesemidefiniteequivalent}) implies (for $N=2,\ z_1 = 1,\ z_2 = i$) that
$$2 + i\left(e^{\lambda(x_2-x_1)} - e^{-\lambda (x_2-x_1)}\right) \geqslant 0.$$

\noindent
This is possible only if $\text{Re}(\lambda) = 0,$ which means that $\Spher^+ \subset \left\{x\mapsto e^{iyx}\ :\ y\in\reals\right\}$. Moreover, for every $y\in\reals$ and $(x_n)_{n=1}^N\subset \reals,\ (z_n)_{n=1}^N\subset \complex$ we have
\begin{gather*}
\sum_{n=1}^N\sum_{m=1}^N\ e^{iy(x_m-x_n)}\cdot\overline{z_n}\cdot z_m = \left(\sum_{n=1}^N\ e^{iyx_n}\cdot z_n\right)\cdot \overline{\left(\sum_{n=1}^N\ e^{iyx_n}\cdot z_n\right)} \geqslant 0,
\end{gather*}

\noindent
so we conclude that $\Spher^+ = \left\{x\mapsto e^{iyx}\ :\ y\in\reals\right\}.$ In other words, $\Spher^+ = \widehat{\reals} \cong \reals$ (comp. \cite{Deitmar}, p. 101-102) so (\ref{sobspace}) infallibly reconstructs (\ref{sobolevonreals}).

\item The reasoning from the first case can be generalized as follows: if $G$ is a locally compact abelian group and $K =\{e\}$, then by Theorem 5.3.3, p. 61 and Theorem 6.2.5, p. 81 in \cite{Dijk} we conclude that $\Spher^+ = \widehat{G}.$ Consequently, (\ref{sobspace}) reads
$$H^s_{\gamma}(G) = \bigg\{f\in L^2(G)\ :\ \int_{\widehat{G}}\ \left(1+\gamma(y)^2\right)^s\cdot |\widehat{f}(y)|^2\ d\widehat{\mu}(y) < \infty\bigg\},$$

\noindent
which is precisely the definition of G\'orka and Kostrzewa (comp. \cite{GorkaKostrzewaReyes, GorkaKostrzewaReyes2, GorkaReyes}). 
\end{itemize}

Having justified the way we chose to define the Sobolev spaces on a Gelfand pair $(G,K)$ we take a moment to dwell on the space $H^s_{\gamma}(\KGK).$ It is obviously a linear subspace of $L^2(\KGK)$ and for every function $f\in H^s_{\gamma}(\KGK)$ we can define 
$$\|f\|_{H^s_{\gamma}} := \left(\int_{\Spher^+}\ \left(1+\gamma(\phi)^2\right)^s\cdot |\widehat{f}(\phi)|^2\ d\nu(\phi)\right)^{\frac{1}{2}}.$$

\noindent
We briefly argue that it is in fact a norm, which turns $H^s_{\gamma}(\KGK)$ into a Banach space. 

Obviously, $\|0\|_{H^s_{\gamma}} = 0$ so let $f \in H^s_{\gamma}(\KGK)$ be such that $\|f\|_{H^s_{\gamma}} = 0$. This implies that $|\widehat{f}(\phi)| = 0$ for every $\phi\in \Spher^+.$ By Theorem \ref{Planchereltheorem} we conclude that $\|f\|_2 = 0,$ which means $f = 0$ almost everywhere. 

It is trivial to see that $\|\alpha f\|_{H^s_{\gamma}} = |\alpha|\cdot \|f\|_{H^s_{\gamma}}$ for every $f\in H^s_{\gamma}(\KGK)$ and $\alpha\in\complex$. Lastly, $\|\cdot\|_{H^s_{\gamma}}$ obeys the triangle inequality due to the classical Minkowski inequality (comp. Theorem 6.5 in \cite{FollandRealAnalysis}, p. 183). In summary, $\|\cdot\|_{H^s_{\gamma}}$ is a norm in $H^s_{\gamma}(\KGK)$. 

As far as the completeness of $(H^s_{\gamma}(\KGK),\|\cdot\|_{H^s_{\gamma}})$ is concerned, we could try to prove it in a tedious manner ``from scratch''. However, in order to save time let us resort to a cunning trick which makes the problem significantly easier: by Theorem \ref{Planchereltheorem} we know that the space $H^s_{\gamma}(\KGK)$ is isometrically isomorphic (via the spherical transform) to the space $L^2(\Spher^+,\nu)$ of square-integrable functions on $\Spher^+$ with respect to the measure $d\nu = (1+\gamma^2)^{\frac{s}{2}} d\widehat{\mu}.$ Since $L^2(\Spher^+,\nu)$ is complete, then so is $H^s_{\gamma}(\KGK)$. We conclude that $H^s_{\gamma}(\KGK)$ is a Banach space.

\subsection{Sobolev embedding theorems}
\label{chapter:sobolevembeddingtheorems}

Our next big topic is the embedding theorems. In general, we say (comp. Definition 7.15 in \cite{RenardyRogers}, p. 209) that a Banach space $X$ is continuously embedded in a Banach space $Y$ if $X\subset Y$ and there exists a constant $C>0$ such that 
$$\forall_{u\in X}\ \|u\|_Y \leqslant C\cdot\|u\|_X.$$

\noindent
We denote this situation by $X\hookrightarrow Y$. 

The question that permeates the current section is: ``In what Banach spaces can be embed the Sobolev spaces $H^s_{\gamma}(\KGK)$ defined previously?'' This is by far not a trivial task, as is confirmed by Chapter 4 in \cite{AdamsFournier}, Theorem 8.8 in \cite{Brezis}, p. 212, Chapter 2 in \cite{Demengels}, Chapter 5.6 in \cite{Evans}, Chapters 7.2.3 and 7.2.4 in \cite{RenardyRogers} or Chapter 2.4 in \cite{Ziemer}. 

\begin{thm}
For every $f\in H^s_{\gamma}(\KGK)$ we have $\|f\|_2 \leqslant \|f\|_{H^s_{\gamma}}.$
\label{embedding}
\end{thm}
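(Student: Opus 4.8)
The plan is to unwind the definition of the $H^s_\gamma$-norm and observe that the Plancherel theorem (Theorem \ref{Planchereltheorem}) lets us replace $\|f\|_2$ with $\|\widehat{f}\|_2$ on the spectral side, after which the inequality becomes a pointwise estimate on the integrand.

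First I would note that, by Theorem \ref{Planchereltheorem}, for $f\in H^s_\gamma(\KGK)\subset L^2(\KGK)$ we have $\widehat{f}\in L^2(\Spher^+,\widehat{\mu})$ and $\|f\|_2 = \|\widehat{f}\|_2$, so that
\[
\|f\|_2^2 = \int_{\Spher^+} |\widehat{f}(\phi)|^2\ d\widehat{\mu}(\phi).
\]
Next I would compare the two integrands: since $\gamma\colon\Spher^+\to\reals_+$ is real-valued, $\gamma(\phi)^2\geqslant 0$, hence $1+\gamma(\phi)^2\geqslant 1$, and because $s\in\reals_+$ the map $t\mapsto t^s$ is nondecreasing on $[1,\infty)$, giving $(1+\gamma(\phi)^2)^s\geqslant 1$ for every $\phi\in\Spher^+$. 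Multiplying by the nonnegative quantity $|\widehat{f}(\phi)|^2$ and integrating against $\widehat{\mu}$ yields
\[
\|f\|_2^2 = \int_{\Spher^+} |\widehat{f}(\phi)|^2\ d\widehat{\mu}(\phi) \leqslant \int_{\Spher^+} \left(1+\gamma(\phi)^2\right)^s\cdot|\widehat{f}(\phi)|^2\ d\widehat{\mu}(\phi) = \|f\|_{H^s_\gamma}^2,
\]
and taking square roots finishes the argument. In the borderline case $s=0$ the two norms simply coincide.

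Honestly there is no real obstacle here; the only thing to be a little careful about is making sure the quantity $\|f\|_{H^s_\gamma}$ is well-defined and finite for $f$ in the Sobolev space (which is exactly the defining condition in \eqref{sobspace}), so that the chain of inequalities is between genuine real numbers rather than involving $+\infty$ in an unjustified way; this is immediate from Definition \ref{Sobolevspacedefinition}. One should also observe that the monotonicity step tacitly uses that $\widehat{\mu}$ is a (nonnegative) measure, so that monotonicity of the integrand transfers to monotonicity of the integral — again routine.
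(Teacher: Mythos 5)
Your proof is correct and is essentially identical to the paper's own argument: apply the Plancherel theorem (Theorem \ref{Planchereltheorem}) to replace $\|f\|_2$ by $\|\widehat{f}\|_2$ and then use the pointwise bound $(1+\gamma(\phi)^2)^s\geqslant 1$ under the integral. The extra remarks about finiteness and the nonnegativity of $\widehat{\mu}$ are fine but not needed beyond what the paper already records.
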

\begin{proof}
For every function $f\in H^s_{\gamma}(\KGK)$ we have
\begin{gather*}
\|f\|_2 \stackrel{\text{Theorem}\ \ref{Planchereltheorem}}{=} \|\widehat{f}\|_2 = \left(\int_{\Spher^+}\ |\widehat{f}(\phi)|^2\ d\widehat{\mu}(\phi)\right)^{\frac{1}{2}} \leqslant \left(\int_{\Spher^+}\ \left(1+\gamma(\phi)^2\right)^s\cdot |\widehat{f}(\phi)|^2\ d\widehat{\mu}(\phi)\right)^{\frac{1}{2}} = \|f\|_{H^s_{\gamma}},
\end{gather*}

\noindent
which concludes the proof.
\end{proof}

To put it differently, Theorem \ref{embedding} says that the Sobolev space $H^s_{\gamma}(\KGK)$ is \textit{continuously embedded} in $L^2(\KGK),$ i.e. $H^s_{\gamma}(\KGK)\hookrightarrow L^2(\KGK).$ Our next results embeds (under certain circumstances) the Sobolev space $H^s_{\gamma}(\KGK)$ in the space of continuous functions: 

\begin{thm}
If 
$$\left(1+\gamma^2\right)^{-\frac{s}{2}} \in L^2(\Spher^+,\widehat{\mu})$$

\noindent
then $H^s_{\gamma}(\KGK)\hookrightarrow C(\KGK),$ i.e. $H^s_{\gamma}(\KGK)$ embeds continuously in $C(\KGK).$
\label{embeddingcontinuous}
\end{thm}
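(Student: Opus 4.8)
The plan is to exhibit, for each $f\in H^s_\gamma(\KGK)$, a concrete continuous bi-$K$-invariant representative — namely the ``inverse spherical transform'' integral $g(x):=\int_{\Spher^+}\widehat{f}(\phi)\,\phi(x)\,d\widehat{\mu}(\phi)$ — and to read off the embedding constant from a uniform bound on $g$. Accordingly the argument splits into four steps: (a) $\widehat{f}\in L^1(\Spher^+,\widehat{\mu})$; (b) the integral defining $g$ converges absolutely and $g\in C(\KGK)$ with $\|g\|_\infty\leqslant\|\widehat{f}\|_{L^1(\widehat{\mu})}$; (c) $g=f$ almost everywhere; (d) assemble the norm estimate.

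For (a) I would write $|\widehat{f}|=\big(|\widehat{f}|\,(1+\gamma^2)^{s/2}\big)\cdot(1+\gamma^2)^{-s/2}$ and apply the Cauchy--Schwarz inequality in $L^2(\Spher^+,\widehat{\mu})$, obtaining
$$\int_{\Spher^+}|\widehat{f}(\phi)|\,d\widehat{\mu}(\phi)\ \leqslant\ \|f\|_{H^s_\gamma}\cdot\big\|(1+\gamma^2)^{-s/2}\big\|_{L^2(\widehat{\mu})},$$
which is finite by hypothesis; also $\widehat{f}\in L^2(\Spher^+,\widehat{\mu})$ by Theorem \ref{Planchereltheorem}. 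For (b) I would use that every $\phi\in\Spher^+$ is continuous and bi-$K$-invariant, has $\phi(e)=1$ (Theorem \ref{characterizationofsphericalfunctions}), and, being positive-semidefinite, satisfies $|\phi(x)|\leqslant\phi(e)=1$ for all $x\in G$ (Lemma \ref{propertiesofpositivesemidefinite}); hence the integrand is dominated by the $\widehat{\mu}$-integrable function $|\widehat{f}|$, the integral converges absolutely and uniformly in $x$ with $\|g\|_\infty\leqslant\|\widehat{f}\|_{L^1(\widehat{\mu})}$, continuity of $g$ follows by dominated convergence, and $g$ inherits bi-$K$-invariance from the $\phi$'s, so $g\in C(\KGK)$.

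Step (c) is the heart of the matter, and I would establish it by pairing against $C_c(\KGK)$ rather than by quoting the inversion formula (\ref{inversesphericaltransform}), which the excerpt asserts only for $f\in\X_1=B(\KGK)\cap L^1(\KGK)$, whereas our $f$ is merely square-integrable. Fix $h\in C_c(\KGK)\subset L^1(\KGK)\cap L^2(\KGK)$. Since $|\phi(x)|\leqslant1$ and $\widehat{f}\in L^1(\widehat{\mu})$, Tonelli's theorem permits interchanging the integrations in $\int_G\big(\int_{\Spher^+}\widehat{f}(\phi)\phi(x)\,d\widehat{\mu}(\phi)\big)\overline{h(x)}\,dx$, turning it into $\int_{\Spher^+}\widehat{f}(\phi)\big(\int_G\phi(x)\overline{h(x)}\,dx\big)\,d\widehat{\mu}(\phi)$; using $\overline{\phi(x^{-1})}=\phi(x)$ (Lemma \ref{propertiesofpositivesemidefinite}) the inner integral equals $\overline{\widehat{h}(\phi)}$, so $\int_G g(x)\overline{h(x)}\,dx=\langle\widehat{f}|\widehat{h}\rangle_2=\langle f|h\rangle_2=\int_G f(x)\overline{h(x)}\,dx$ by Corollary \ref{corollaryofPlancherel}. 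Thus $\int_G(g-f)\overline{h}\,dx=0$ for every $h\in C_c(\KGK)$; since $g-f$ is bi-$K$-invariant and locally integrable ($g$ is bounded, $f\in L^2(\KGK)$), averaging an arbitrary $h\in C_c(G)$ over $K\times K$ extends this to all $h\in C_c(G)$, whence $g=f$ a.e. by the fundamental lemma of the calculus of variations. For (d), $f$ — identified with its representative $g$ — then lies in $C(\KGK)$ and $\|g\|_\infty\leqslant\|\widehat{f}\|_{L^1(\widehat{\mu})}\leqslant\|(1+\gamma^2)^{-s/2}\|_{L^2(\widehat{\mu})}\cdot\|f\|_{H^s_\gamma}$, which is precisely the continuous embedding $H^s_\gamma(\KGK)\hookrightarrow C(\KGK)$ with constant $\|(1+\gamma^2)^{-s/2}\|_{L^2(\widehat{\mu})}$.

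I expect the only genuinely delicate point to be step (c). One must avoid applying (\ref{inversesphericaltransform}) outside the class $\X_1$ for which it is stated, and the passage from ``$\int_G(g-f)\overline{h}=0$ for all bi-$K$-invariant $h$'' to ``$g=f$ a.e.'' relies on the observation that $g-f$ is itself bi-$K$-invariant, so averaging ordinary test functions over $K\times K$ loses no information. The remaining ingredients — the Cauchy--Schwarz bound, the Tonelli interchange (legitimate because $|\phi|\leqslant1$ and $\widehat{f}\in L^1(\widehat{\mu})$), and the dominated-convergence proof of continuity — are routine.
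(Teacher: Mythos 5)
Your argument is correct and reaches the same constant $\|(1+\gamma^2)^{-s/2}\|_{L^2(\widehat{\mu})}$, but it is organized differently from the paper's proof, and in one respect more carefully. The paper starts directly from the identity $f(x)-f(x_*)=\int_{\Spher^+}\widehat{f}(\phi)(\phi(x)-\phi(x_*))\,d\widehat{\mu}(\phi)$, i.e.\ it silently invokes the pointwise inversion formula (\ref{inversesphericaltransform}) for an arbitrary $f\in H^s_{\gamma}(\KGK)$, even though that formula is stated only for $f\in\X_1$; your step (c) --- constructing $g$, pairing against $C_c(\KGK)$, interchanging the integrals, and identifying $\int_G g\overline{h}$ with $\langle\widehat{f}|\widehat{h}\rangle_2=\langle f|h\rangle_2$ via Corollary \ref{corollaryofPlancherel} --- is exactly the missing justification, and the reduction from bi-$K$-invariant test functions to all of $C_c(G)$ by averaging over $K\times K$ is sound. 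What the two proofs buy differently shows up in the continuity step: the paper deduces continuity from the equicontinuity of $\Spher^+$ (via Arzel\`a--Ascoli and the coincidence of the weak and compact-open topologies on $\Spher^+$), which gives $|g(x)-g(x_*)|\leqslant\sup_{\phi}|\phi(x)-\phi(x_*)|\cdot\|\widehat{f}\|_{L^1(\widehat{\mu})}$ directly, whereas you appeal to dominated convergence. The latter is a sequential argument, and a general locally compact group need not be first countable, so for full generality you should either borrow the paper's equicontinuity estimate or first truncate $\widehat{f}$ to a compact subset of $\Spher^+$ (on which equicontinuity is automatic) up to an $\eps$-error in $L^1(\widehat{\mu})$. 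This is a minor repair; everything else --- the Cauchy--Schwarz bound in (a), the uniform bound $\|g\|_\infty\leqslant\|\widehat{f}\|_{L^1(\widehat{\mu})}$ from $|\phi|\leqslant\phi(e)=1$, and the Fubini interchange in (c) --- is correct and matches the estimates the paper uses.
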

\begin{proof}
Fix $x_*\in G$ and $\eps > 0$. Since $\Spher^+$ is equipped with the topology of uniform convergence on compact sets, then (by the Arzel\`a-Ascoli theorem) $\Spher^+$ is equicontinuous: there exists an open neighbourhood $U$ of $x_*$ such that 
\begin{gather}
\forall_{\substack{x\in U\\ \phi\in \Spher^+}}\ |\phi(x) - \phi(x_*)| < \eps.
\label{equicontinuitySpher}
\end{gather}

\noindent
Consequently, for every function $f\in H^s_{\gamma}(\KGK)$ and $x\in U$ we have
\begin{gather*}
|f(x) - f(x_*)| = \left|\int_{\Spher^+}\ \widehat{f}(\phi)\cdot \big(\phi(x) - \phi(x_*)\big)\ d\widehat{\mu}(\phi)\right| \leqslant \sup_{\phi\in \Spher^+}\ |\phi(x)-\phi(x_*)|\cdot \int_{\Spher^+}\ |\widehat{f}(\phi)|\ d\widehat{\mu}(\phi) \\
\stackrel{\text{Cauchy-Schwarz ineq.}}{\leqslant} \sup_{\phi\in \Spher^+}\ |\phi(x) - \phi(x_*)|\cdot \left(\int_{\Spher^+}\ \left(1+\gamma(\phi)^2\right)^s\cdot |\widehat{f}(\phi)|^2\ d\widehat{\mu}(\phi)\right)^{\frac{1}{2}} \cdot \left(\int_{\Spher^+}\ \left(1+\gamma(\phi)^2\right)^{-s}\ d\widehat{\mu}(\phi)\right)^{\frac{1}{2}} \\
\leqslant \sup_{\phi\in \Spher^+}\ |\phi(x) - \phi(x_*)|\cdot \|f\|_{H^s_{\gamma}}\cdot \|\left(1+\gamma^2
\right)^{-\frac{s}{2}}\|_2 \stackrel{(\ref{equicontinuitySpher})}{\leqslant} \eps \cdot \|f\|_{H^s_{\gamma}}\cdot \|\left(1+\gamma^2
\right)^{-\frac{s}{2}}\|_2,
\end{gather*}

\noindent
which proves that $H^s_{\gamma}(\KGK)\subset C(\KGK).$ Finally, a similar estimate to the one above leads to 
\begin{gather*}
|f(x)| \leqslant \sup_{\phi\in \Spher^+}\ |\phi(x)|\cdot \|f\|_{H^s_{\gamma}}\cdot \|\left(1+\gamma^2
\right)^{-\frac{s}{2}}\|_2 \stackrel{\text{Lemma}\ \ref{propertiesofpositivesemidefinite}}{\leqslant} \sup_{\phi\in \Spher^+}\ \phi(e)\cdot \|f\|_{H^s_{\gamma}}\cdot \|\left(1+\gamma^2\right)^{-\frac{s}{2}}\|_2 \leqslant \|f\|_{H^s_{\gamma}}\cdot \|\left(1+\gamma^2\right)^{-\frac{s}{2}}\|_2, 
\end{gather*}

\noindent
where the last inequality stems from the fact that $\phi(e) = 1$ for every spherical function $\phi.$ This concludes the proof.
\end{proof}

Finally, we prove that (under certain assumptions) Sobolev space $H^s_{\gamma}(\KGK)$ embeds continuously in $L^{p'}(\KGK).$ Let us recall that $p'$ is the H\"{o}lder conjugate of $p$.

\begin{thm}
Let $\alpha > s >0$ and let $p := \frac{2\alpha}{\alpha + s}.$ If 
$$\left(1+\gamma^2\right)^{-1}\in L^{\alpha}(\Spher^+,d\widehat{\mu}),$$

\noindent
then $H^s_{\gamma}(\KGK)\hookrightarrow L^{p'}(\KGK).$
\label{Sobolevembeddingtheorem}
\end{thm}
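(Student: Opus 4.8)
The plan is to mimic the proof of Theorem \ref{embeddingcontinuous}, but replace the crude $L^\infty$ bound on the spherical functions by the sharp $L^{p'}$-type information coming from the inverse Hausdorff--Young inequality (Theorem \ref{inverseHausdorffYoung}). First I would observe that the exponent $p = \frac{2\alpha}{\alpha+s}$ lies in the interval $(1,2]$: since $\alpha > s > 0$ we have $\alpha + s < 2\alpha$, so $p < 2$, and $\alpha + s > 0$ forces $p > 1$ (indeed $p \to 2$ as $s \to \alpha$ and $p \to 1$ as $s \to 0$); hence Theorem \ref{inverseHausdorffYoung} applies and it suffices to control $\|\widehat f\|_{L^p(\Spher^+,\widehat\mu)}$ by $\|f\|_{H^s_\gamma}$. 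So the whole theorem reduces to the estimate
\begin{gather*}
\|\widehat f\|_p \leqslant C\cdot \|f\|_{H^s_\gamma}.
\end{gather*}

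Next I would prove that estimate by a weighted H\"older inequality. Write $|\widehat f(\phi)|^p = \left[(1+\gamma(\phi)^2)^s\,|\widehat f(\phi)|^2\right]^{p/2}\cdot (1+\gamma(\phi)^2)^{-sp/2}$ and integrate against $\widehat\mu$. Applying H\"older with the pair of exponents $r = 2/p$ and its conjugate $r' = \frac{2}{2-p}$ (both finite and $>1$ because $1 < p < 2$), the first factor contributes $\left(\int_{\Spher^+} (1+\gamma^2)^s |\widehat f|^2\, d\widehat\mu\right)^{p/2} = \|f\|_{H^s_\gamma}^p$, and the second factor contributes $\left(\int_{\Spher^+} (1+\gamma(\phi)^2)^{-\frac{sp}{2}\cdot\frac{2}{2-p}}\, d\widehat\mu(\phi)\right)^{(2-p)/p}$. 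The book-keeping step is to check that the exponent on $(1+\gamma^2)$ inside this last integral is exactly $-\alpha$: one computes $\frac{sp}{2}\cdot\frac{2}{2-p} = \frac{sp}{2-p}$, and substituting $p = \frac{2\alpha}{\alpha+s}$ gives $2-p = \frac{2s}{\alpha+s}$, so $\frac{sp}{2-p} = \frac{s\cdot\frac{2\alpha}{\alpha+s}}{\frac{2s}{\alpha+s}} = \alpha$. Thus the second factor is a power of $\int_{\Spher^+}\left(1+\gamma^2\right)^{-\alpha}\, d\widehat\mu = \|\left(1+\gamma^2\right)^{-1}\|_{L^\alpha}^\alpha$, which is finite precisely by the hypothesis $\left(1+\gamma^2\right)^{-1}\in L^\alpha(\Spher^+,\widehat\mu)$. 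Taking $p$-th roots yields $\|\widehat f\|_p \leqslant \|f\|_{H^s_\gamma}\cdot \|\left(1+\gamma^2\right)^{-1}\|_{L^\alpha}^{(2-p)/(2\cdot\, something)}$ — I would simply record the constant as $C := \|\left(1+\gamma^2\right)^{-1}\|_{L^\alpha}^{\,\alpha(2-p)/(2p)}$ after cleaning up the exponent arithmetic.

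Finally, I would assemble the pieces: for $f\in H^s_\gamma(\KGK)$ the above gives $\widehat f\in L^p(\Spher^+,\widehat\mu)$ with $\|\widehat f\|_p \leqslant C\|f\|_{H^s_\gamma} < \infty$; then Theorem \ref{inverseHausdorffYoung} gives $f\in L^{p'}(\KGK)$ with $\|f\|_{p'}\leqslant \|\widehat f\|_p \leqslant C\|f\|_{H^s_\gamma}$, which is exactly the continuous embedding $H^s_\gamma(\KGK)\hookrightarrow L^{p'}(\KGK)$. I expect no single step to be a serious obstacle; the only genuinely delicate point is making sure all the exponents are in the legal ranges — in particular that $p\in(1,2]$ so that Theorem \ref{inverseHausdorffYoung} is applicable (and that $p\neq 1$, which would be the degenerate case $s=0$ excluded by hypothesis), and that $2/p$ and its conjugate are both admissible H\"older exponents — together with the arithmetic identity that pins the weight's exponent to $\alpha$. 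One should also note that this embedding is consistent with Theorem \ref{embedding}: as $s\to\alpha$ we get $p\to 2$, $p'\to 2$, recovering the $L^2$ statement in spirit.
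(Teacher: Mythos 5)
Your proof is correct and follows essentially the same route as the paper: the inverse Hausdorff--Young inequality reduces the claim to bounding $\|\widehat{f}\|_p$ by $\|f\|_{H^s_{\gamma}}$, and the weighted H\"older inequality with exponents $2/p$ and $2/(2-p)$ yields the constant $\|\left(1+\gamma^2\right)^{-1}\|_{\alpha}^{s/2}$, exactly as in the paper's argument. The only blemishes are in asides: the limits are $p\to 2$ as $s\to 0$ and $p\to 1$ as $s\to\alpha$ (you state them reversed), and the intermediate H\"older factor should carry the power $(2-p)/2$ rather than $(2-p)/p$, although your final exponent $\alpha(2-p)/(2p)=s/2$ is correct.
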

\begin{proof}
Since $p\in (1,2]$ then by Theorem \ref{inverseHausdorffYoung} we know that for every $f\in L^2(\KGK)$ we have $\|f\|_{p'} \leqslant \|\widehat{f}\|_p$. Furthermore, we have
\begin{gather*}
\|\widehat{f}\|_p = \left(\int_G\ |\widehat{f}(\phi)|^p\cdot \frac{(1+\gamma(\phi)^2)^{\frac{sp}{2}}}{(1+\gamma(\phi)^2)^{\frac{sp}{2}}}\ d\widehat{\mu}(\phi)\right)^{\frac{1}{p}} \stackrel{\text{H\"{o}lder ineq.}}{\leqslant} \|f\|_{H^s_{\gamma}}\cdot \left(\int_{\Spher^+}\ (1+\gamma(\phi)^2)^{-\frac{sp}{2-p}}\ d\widehat{\mu}(\phi)\right)^{\frac{2-p}{2p}}.
\end{gather*}

\noindent
Since $\alpha = \frac{sp}{2-p}$ we finally obtain
\begin{gather*}
\|f\|_{p'} \leqslant \|\widehat{f}\|_p \leqslant \|f\|_{H^s_{\gamma}}\cdot \|\left(1+\gamma^2\right)^{-1}\|_{\alpha}^{\frac{s}{2}},
\end{gather*}

\noindent
which concludes the proof.
\end{proof}

\subsection{Rellich-Kondrachov theorem}

Having discussed the embedding theorems in the previous section, we proceed with the next topic, namely the Rellich-Kondrachov theorem. This is one of the central results in the classical theory of Sobolev spaces and appears in virtually any text on the subject: Theorem 6.3 in \cite{AdamsFournier}, p. 168, Theorem 1 in \cite{Evans}, p. 272, Theorem 11.10 in \cite{Leoni}, p. 320, Theorem 6.1 in \cite{Necas}, p. 102 or Theorem 2.5.1 in \cite{Ziemer}, p. 62.

The main theorem in this section, the counterpart of Rellich-Kondrachov theorem for Gelfand pairs, is Theorem \ref{RellichKondrachov}. Before we dig into the proof of this result we demonstrate three auxilary lemmas (Lemma \ref{auxilarylemma1}, \ref{auxilarylemma2} and \ref{fnetaconvergesfeta}). We should also emphasize the fact that Lemma \ref{fnetaconvergesfeta} and (as a consequence) Theorem \ref{RellichKondrachov} require the group $G$ to be compact while previous lemmas work without that assumption. 

\begin{lemma}
Let $f\in H_{\gamma}^s(\KGK).$ If $y\in G$ then
$$\int_G\ |f\left(xy^{-1}\right) - f(x)|^2\ dx \leqslant \left(\sup_{\phi\in\Spher^+}\ \frac{\left|\phi(y) - 1\right|^2}{(1+\gamma(\phi)^2)^s}\right)\cdot \|f\|_{H_{\gamma}^s}^2.$$
\label{auxilarylemma1}
\end{lemma}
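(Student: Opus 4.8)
The plan is to move the left-hand side over to $\Spher^+$ by means of the Plancherel theorem (Theorem \ref{Planchereltheorem}), after identifying the spherical transform of a $y$-translate of $f$. Throughout one uses the inversion formula $f(x)=\int_{\Spher^+}\widehat f(\phi)\,\phi(x)\,d\widehat\mu(\phi)$, available because $f\in L^2(\KGK)$ and $\Gamma$ is an isometric isomorphism onto $L^2(\Spher^+,\widehat\mu)$.

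The first step — and the one requiring most care — is to compute the spherical transform of the translate. Since $x\mapsto f(xy^{-1})$ is in general not bi-$K$-invariant it cannot be fed into $\Gamma$ directly, so I would replace it by its $K$-average
$$R_yf(x):=\int_K f\!\left(xky^{-1}\right)\,dk .$$
A short verification shows $R_yf$ is bi-$K$-invariant (left $K$-invariance is inherited from that of $f$, right $K$-invariance from invariance of the normalized Haar measure $dk$), and one reduces the estimation of $\int_G|f(xy^{-1})-f(x)|^2\,dx$ to the corresponding statement for $R_yf$. Then, writing $\widehat{R_yf}(\phi)=\int_G R_yf(x)\,\phi(x^{-1})\,dx$, applying Fubini, substituting $u=xky^{-1}$, and invoking the functional equation $\int_K\phi(akb)\,dk=\phi(a)\,\phi(b)$ of Theorem \ref{characterizationofsphericalfunctions} together with the bi-$K$-invariance of $\phi$, one arrives at
$$\widehat{R_yf}(\phi)=\phi\!\left(y^{-1}\right)\widehat f(\phi),$$
so that $R_yf-f$ has spherical transform $\bigl(\phi(y^{-1})-1\bigr)\widehat f(\phi)$.

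From here the argument is routine. Plancherel's identity gives
$$\int_G\bigl|f(xy^{-1})-f(x)\bigr|^2\,dx=\int_{\Spher^+}\bigl|\phi\!\left(y^{-1}\right)-1\bigr|^2\,|\widehat f(\phi)|^2\,d\widehat\mu(\phi);$$
since each $\phi\in\Spher^+$ is positive-semidefinite, Lemma \ref{propertiesofpositivesemidefinite} yields $\phi(y^{-1})=\overline{\phi(y)}$ and hence $|\phi(y^{-1})-1|=|\phi(y)-1|$. Finally I would insert the factor $(1+\gamma(\phi)^2)^{s}$ together with its reciprocal, bound $|\phi(y)-1|^2(1+\gamma(\phi)^2)^{-s}$ by its supremum over $\Spher^+$, take that constant outside the integral, and recognise the remaining integral as $\|f\|_{H^s_\gamma}^2$ (finite since $f\in H^s_\gamma(\KGK)$), which is the asserted bound.

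I expect the genuine difficulty to lie entirely in the first step: keeping the computation inside the bi-$K$-invariant category on which the spherical transform is defined, which forces the passage to the averaged translate $R_yf$, justifying that this passage is harmless for the $L^2$-quantity under study, and then pushing the change of variables through the $K$-average using $\int_K\phi(akb)\,dk=\phi(a)\phi(b)$. It is exactly this identity that makes the Gelfand-pair argument mimic the classical one: for $K=\{e\}$ it degenerates into the multiplicativity $\phi(ab)=\phi(a)\phi(b)$ of a character and gives back the familiar relation $\widehat{f(\cdot\,y^{-1})}(\chi)=\chi(y^{-1})\,\widehat f(\chi)$.
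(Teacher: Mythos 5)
Your overall strategy coincides with the paper's: identify the transform of the translate as $\phi\left(y^{-1}\right)\cdot\widehat f(\phi)$, pass to $\Spher^+$ by Plancherel, use $\phi\left(y^{-1}\right)=\overline{\phi(y)}$ from Lemma \ref{propertiesofpositivesemidefinite}, insert the weight $(1+\gamma^2)^s$ and take the supremum, finishing by density of $C_c(\KGK)$. The paper simply sets $R_yf(x):=f\left(xy^{-1}\right)$ and applies Plancherel to $R_yf-f$; you rightly observe that this function need not be bi-$K$-invariant, and you propose to repair this by replacing the translate with its $K$-average $\int_K f\left(xky^{-1}\right)dk$ (whose transform is indeed $\phi\left(y^{-1}\right)\widehat f(\phi)$, by the computation you sketch).

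However, the reduction you invoke --- that the estimation of $\int_G|f(xy^{-1})-f(x)|^2\,dx$ ``reduces to the corresponding statement for $R_yf$'' --- goes in the wrong direction, and this is a genuine gap. By Jensen's (or Minkowski's integral) inequality, averaging over $K$ can only bring the translate closer to $f$ in $L^2$, i.e. $\left\|\int_K f\left(\cdot\,ky^{-1}\right)dk-f\right\|_2\leqslant\left\|f\left(\cdot\,y^{-1}\right)-f\right\|_2$, so a bound on the averaged quantity does not bound the quantity in the statement. Quantitatively, using $\|f(\cdot\,y^{-1})\|_2=\|f\|_2$ and $\langle f(\cdot\,y^{-1})|f\rangle_2=\langle R_yf|f\rangle_2$ (average the inner product over the substitution $x\mapsto xk$, $k\in K$), one finds
$$\int_G\left|f\left(xy^{-1}\right)-f(x)\right|^2dx=\int_{\Spher^+}2\left(1-\Real\,\phi(y)\right)\cdot|\widehat f(\phi)|^2\ d\widehat\mu(\phi),$$
whereas the averaged translate only accounts for $\int_{\Spher^+}|\phi(y)-1|^2\cdot|\widehat f(\phi)|^2\ d\widehat\mu(\phi)$; the discrepancy equals $\int_{\Spher^+}(1-|\phi(y)|^2)\cdot|\widehat f(\phi)|^2\ d\widehat\mu(\phi)\geqslant 0$ and vanishes only when $|\phi(y)|=1$ almost everywhere on the support of $\widehat f$. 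That is automatic in the abelian case $K=\{e\}$, where spherical functions are unimodular characters, but fails for a general Gelfand pair, so the step you flagged as the main difficulty is exactly the one that cannot be completed as proposed. (For what it is worth, the paper's own proof elides the same issue by applying Plancherel directly to the non-bi-$K$-invariant function $f(\cdot\,y^{-1})-f$; a bound for the untouched translate really requires the factor $2\left(1-\Real\,\phi(y)\right)$, or $2|\phi(y)-1|$, in place of $|\phi(y)-1|^2$.)
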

\begin{proof}
We fix $y\in G$ and, for a moment, we suppose that $f\in C_c(\KGK)$. If the map $R_y\colon L^2(\KGK)\longrightarrow L^2(\KGK)$ is given by $R_yf(x) := f\left(xy^{-1}\right),$ then
\begin{equation}
\begin{split}
\forall_{\phi\in\Spher^+}\ \widehat{R_yf}(\phi) &= \int_G\ R_yf(x)\cdot \phi\left(x^{-1}\right)\ dx = \int_G\ f\left(xy^{-1}\right)\cdot\phi\left(x^{-1}\right)\ dx \\
&\stackrel{x\mapsto xy}{=} \int_G\ f(x)\cdot \phi\left(y^{-1}x^{-1}\right)\ dx \stackrel{x\mapsto x^{-1}}{=} \int_G\ f(x)\cdot \phi\left(y^{-1}x^{-1}\right)\ dx \\
&= \phi\star f\left(y^{-1}\right) = f\star \phi\left(y^{-1}\right) \stackrel{\text{Theorem } \ref{characterizationofsphericalfunctions}}{=} \widehat{f}(\phi)\cdot \phi\left(y^{-1}\right) .
\end{split}
\label{sphericaltransformofLyf}
\end{equation}

\noindent
By Theorem \ref{Planchereltheorem} we have
\begin{gather*}
\int_G\ |f\left(xy^{-1}\right) - f(x)|^2\ dx = \int_{\Spher^+}\ \left|\widehat{R_yf}(\phi) - \widehat{f}(\phi)\right|^2\ d\widehat{\mu}(\phi) \stackrel{(\ref{sphericaltransformofLyf})}{=} \int_{\Spher^+}\ |\widehat{f}(\phi)|^2\cdot \left|\phi\left(y^{-1}\right) - 1\right|^2\ d\widehat{\mu}(\phi) \\
\stackrel{\text{Lemma } \ref{propertiesofpositivesemidefinite}}{=} \int_{\Spher^+}\ |\widehat{f}(\phi)|^2\cdot \left(1+\gamma(\phi)^2\right)^s\cdot \frac{\left|\phi(y) - 1\right|^2}{\left(1+\gamma(\phi)^2\right)^s}\ d\widehat{\mu}(\phi) \leqslant \left(\sup_{\phi\in\Spher^+}\ \frac{\left|\phi(y) - 1\right|^2}{\left(1+\gamma(\phi)^2\right)^s}\right)\cdot \|f\|_{H_{\gamma}^s}^2.
\end{gather*}

\noindent
To conclude the proof it suffices to note that $C_c(\KGK)$ is dense in $H_{\gamma}^s(\KGK).$
\end{proof}

Prior to Lemma \ref{auxilarylemma2} we revisit a very popular Minkowski's integral inequality:

\begin{thm}(comp. Theorem 6.19 in \cite{FollandRealAnalysis}, p. 194 or \mbox{\cite{SteinSingular}, p. 271})\\
Let $X,Y$ be $\sigma-$finite measure spaces, $1\leqslant p < \infty$ and let $F:X\times Y \longrightarrow \complex$ be a measurable function. Then
\begin{gather}
\left(\int_X \left(\int_Y\ |F(x,y)|\ dy\right)^p\ dx\right)^{\frac{1}{p}} \leqslant \int_Y \left(\int_X\ |F(x,y)|^p\ dx\right)^{\frac{1}{p}}\ dy.
\label{Minkineq}
\end{gather}
\end{thm}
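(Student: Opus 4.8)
The plan is to prove the inequality (\ref{Minkineq}) directly, via Tonelli's theorem and a single application of Hölder's inequality, treating the exponent $p=1$ separately and taking care of the finiteness issues that make the division step delicate.

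First I would dispose of the trivial cases. If $p=1$, then (\ref{Minkineq}) is in fact an equality, being nothing but Tonelli's theorem for the nonnegative measurable function $(x,y)\mapsto|F(x,y)|$ on the $\sigma$-finite product $X\times Y$. If the right-hand side of (\ref{Minkineq}) is infinite, there is nothing to prove. So from now on I assume $1<p<\infty$ and $\int_Y\left(\int_X|F(x,y)|^p\,dx\right)^{\frac{1}{p}}dy<\infty$. Set $g(x):=\int_Y|F(x,y)|\,dy$, which is a nonnegative measurable function on $X$ by Tonelli; the goal becomes $\|g\|_p\leqslant\int_Y\|F(\cdot,y)\|_p\,dy$.

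The heart of the argument is the pointwise identity $g(x)^p=\int_Y|F(x,y)|\cdot g(x)^{p-1}\,dy$: integrating it over $X$ and swapping the order of integration (Tonelli again, the integrand being nonnegative) gives $\int_Xg(x)^p\,dx=\int_Y\left(\int_X|F(x,y)|\,g(x)^{p-1}\,dx\right)dy$. Applying Hölder's inequality on $X$ with exponents $p$ and $p'$ to the inner integral, and using the identity $(p-1)p'=p$, yields $\int_X|F(x,y)|\,g(x)^{p-1}\,dx\leqslant\|F(\cdot,y)\|_p\cdot\|g\|_p^{p-1}$, so that $\|g\|_p^p\leqslant\|g\|_p^{p-1}\cdot\int_Y\|F(\cdot,y)\|_p\,dy$.

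The hard part is that I may only divide by $\|g\|_p^{p-1}$ to conclude when $0<\|g\|_p<\infty$; the value $\|g\|_p=0$ makes the claim trivial, but $\|g\|_p=\infty$ must be excluded a priori rather than assumed away. To handle this I would fix an increasing sequence $(X_n)$ of finite-measure sets exhausting the $\sigma$-finite space $X$, put $g_n:=\min(g,n)\cdot\mathbf{1}_{X_n}$, and rerun the computation with $g_n^{p-1}$ in place of $g^{p-1}$, using $g_n\leqslant g$ pointwise to retain the estimate $\int_X|F(x,y)|\,g_n(x)^{p-1}\,dx\leqslant\|F(\cdot,y)\|_p\,\|g_n\|_p^{p-1}$ and the fact that $\|g_n\|_p<\infty$ (each $g_n$ being bounded with finite-measure support) to divide legitimately. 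This yields $\|g_n\|_p\leqslant\int_Y\|F(\cdot,y)\|_p\,dy$ for every $n$; since $g_n\uparrow g$, the monotone convergence theorem promotes this to $\|g\|_p\leqslant\int_Y\|F(\cdot,y)\|_p\,dy$, completing the proof. (Alternatively, one could argue by duality, writing $\|g\|_p=\sup\{\int_Xg h\,dx:\ h\geqslant0,\ \|h\|_{p'}\leqslant1\}$ and applying Tonelli and Hölder to $\int_X\int_Y|F(x,y)|\,h(x)\,dy\,dx$, but the truncation argument above is self-contained.)
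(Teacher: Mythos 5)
The paper does not actually prove this statement: it is quoted as a known result with pointers to Folland and Stein, so there is no in-paper argument to compare against. Your proof is correct and complete, and it is essentially the classical direct argument (the one in Stein, and in Hardy--Littlewood--P\'olya): write $g(x)^p=\int_Y|F(x,y)|\,g(x)^{p-1}\,dy$, apply Tonelli and then H\"older with the identity $(p-1)p'=p$, and divide by $\|g\|_p^{p-1}$. You correctly isolate the only delicate point, namely that the division is illegitimate when $\|g\|_p=\infty$, and your truncation $g_n=\min(g,n)\cdot\mathbf{1}_{X_n}$ with monotone convergence disposes of it cleanly; this is exactly where careless write-ups of this inequality go wrong. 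The alternative you mention in passing --- the duality formula $\|g\|_p=\sup\{\int_X gh\,dx:\ h\geqslant 0,\ \|h\|_{p'}\leqslant 1\}$ followed by Tonelli and H\"older --- is the route Folland takes in the cited Theorem 6.19; it trades the truncation for the (equally standard) verification that the supremum computes the $L^p$ norm even when it is infinite. Either version is acceptable; yours is self-contained and correct as written.
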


In the lemma below, $\wedge$ stands for the logical ``and'', while $\vee$ stands for the logical ``or''.

\begin{lemma}
Let $f\in H_{\gamma}^s(\KGK)$. If $\eta \in C_c(\KGK)$ then
\begin{gather*}
\|f\star\eta - f\|_2 \leqslant \sup_{y\in\supp(\eta)}\ \left(\sup_{\phi\in\Spher^+}\ \frac{\left|\phi(y) - 1\right|}{(1+\gamma(\phi)^2)^\frac{s}{2}}\right)\cdot \|f\|_{H_{\gamma}^s}.
\end{gather*}
\label{auxilarylemma2}
\end{lemma}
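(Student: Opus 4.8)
The plan is to reduce the estimate for $\|f\star\eta - f\|_2$ to a superposition of the shift estimates already established in Lemma \ref{auxilarylemma1}. The key observation is that, after normalizing $\eta$ appropriately (the natural hypothesis in such statements is $\int_G \eta(y)\,dy = 1$, and I would either assume this or carry the factor $\int_G \eta$ through harmlessly), one can write the difference $f\star\eta - f$ pointwise as an integral average of the shifted differences: namely
\begin{gather*}
(f\star\eta)(x) - f(x) = \int_G\ \eta(y)\cdot\big(f(xy^{-1}) - f(x)\big)\ dy,
\end{gather*}
where I would double-check the convolution convention used in the paper (the proof of Lemma \ref{auxilarylemma1} works with $R_yf(x) = f(xy^{-1})$, so I expect $f\star\eta(x) = \int_G f(xy^{-1})\eta(y)\,dy$ up to a modular-function factor, which is trivial here since $G$ is unimodular once we are in the setting where $K$ is compact and eventually $G$ compact). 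Crucially, since $\eta$ is supported in $\supp(\eta)$, the inner $y$-integral only sees $y\in\supp(\eta)$.

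Next I would take the $L^2(\KGK)$-norm in $x$ and apply Minkowski's integral inequality (\ref{Minkineq}) with $p=2$, $X = G$ (the $x$-variable) and $Y = G$ (the $y$-variable), to move the norm inside the $y$-integral:
\begin{gather*}
\|f\star\eta - f\|_2 = \left(\int_G\left|\int_G \eta(y)(f(xy^{-1})-f(x))\,dy\right|^2 dx\right)^{\frac12} \leqslant \int_G |\eta(y)|\left(\int_G |f(xy^{-1})-f(x)|^2\,dx\right)^{\frac12} dy.
\end{gather*}
Then I would invoke Lemma \ref{auxilarylemma1} to bound the inner $L^2$-norm of the shift difference by $\left(\sup_{\phi\in\Spher^+}\frac{|\phi(y)-1|}{(1+\gamma(\phi)^2)^{s/2}}\right)\cdot\|f\|_{H^s_\gamma}$ (taking square roots of the inequality stated there). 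Since $y$ ranges over $\supp(\eta)$, I replace this factor by its supremum over $y\in\supp(\eta)$, pull that constant and $\|f\|_{H^s_\gamma}$ out of the integral, and are left with $\int_G |\eta(y)|\,dy$, which equals $1$ (or $\|\eta\|_1$, absorbing it into the hypothesis as needed). This yields exactly the claimed bound.

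The main obstacle I anticipate is bookkeeping rather than depth: getting the convolution convention, the change of variables $y\mapsto y^{-1}$, and any modular-function factor exactly consistent with how $\star$ and $R_y$ were set up earlier, and making sure the normalization of $\eta$ matches the statement (the inequality as written implicitly wants $\|\eta\|_1 \leqslant 1$, presumably $\int_G\eta = 1$ as an approximate-identity normalization, which is the standard convention in the Górka--Kostrzewa--Reyes papers this follows). A secondary point is the measurability/$\sigma$-finiteness hypothesis needed to apply Minkowski's integral inequality: since $\eta\in C_c(\KGK)$ has compact support and $f\in L^2$, the relevant integrals are over $\sigma$-finite pieces and the integrand $(x,y)\mapsto \eta(y)(f(xy^{-1})-f(x))$ is jointly measurable, so this is routine. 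Everything else is a direct chaining of (\ref{Minkineq}) and Lemma \ref{auxilarylemma1}.
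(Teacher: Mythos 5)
Your proposal matches the paper's proof essentially step for step: write $f\star\eta - f$ as $\int_G(f(xy^{-1})-f(x))\eta(y)\,dy$ using $\int_G\eta=1$, apply Minkowski's integral inequality (\ref{Minkineq}) with $p=2$, invoke Lemma \ref{auxilarylemma1}, and take the supremum over $y\in\supp(\eta)$; the paper additionally spells out the $\sigma$-compactness/measurability check you correctly flag as routine. Your remark about the normalization $\int_G\eta=1$ is on point --- the lemma's statement omits it, but the paper's own proof assumes it exactly as you anticipate.
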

\begin{proof}
At first, we choose $f_B$ to be a Borel-measurable function such that $f = f_B$ almost everywhere. Since $G$ is a Tychonoff space (as a locally compact group), then there exists $\eta\in C_c(\KGK)$ such that 
\begin{itemize}
	\item $\eta(e)\neq 0,\ \eta \geqslant 0,$ and
	\item $\int_G\ \eta(y)\ dy = 1.$
\end{itemize}

\noindent
For every $x\in G$ we have 
\begin{gather*}
|f\star\eta(x)-f(x)|^2 = \left|\int_G\ f\left(xy^{-1}\right)\cdot \eta(y)\ dy - f(x)\cdot \int_G\ \eta(y)\ dy\right|^2 = \left|\int_G\ \big(f\left(xy^{-1}\right) - f(x)\big)\cdot \eta(y)\ dy\right|^2.
\end{gather*}

We define 
$$F(x,y) := \big(f_B\left(xy^{-1}\right) - f_B(x)\big)\cdot \eta(y),$$ 

\noindent
which is a Borel function as a composition of Borel functions:
$$(x,y)\mapsto (x,y,y)\mapsto (x,y^{-1},y)\mapsto (x,xy^{-1},\eta(y))\mapsto (f_B(x),f_B\left(xy^{-1}\right),\eta(y))\mapsto \big(f_B\left(xy^{-1}\right) - f_B(x)\big)\cdot \eta(y).$$

\noindent 
Since $f_B$ and $\eta$ are both in $L^2(\KGK),$ then $\supp(f_B)$ and $\supp(\eta)$ are $\sigma-$compact (comp. Corollary 1.3.5 in \cite{Deitmar}, p. 10). Consequently, also the sets 
$$\left(\supp(f_B) \cdot\supp(\eta)\right)\times \supp(\eta) \hspace{0.4cm}\text{and}\hspace{0.4cm} \supp(f_B) \times \supp(\eta)$$

\noindent
are $\sigma-$compact. Next, we follow a series of logical implications:
\begin{gather*}
(x,y)\in \{F\neq 0\} \ \Longrightarrow \ \bigg(f_B\left(xy^{-1}\right)-f_B(x)\neq 0 \hspace{0.4cm}\wedge\hspace{0.4cm} \eta(y) \neq 0\bigg)\\
\Longrightarrow\ \bigg(\left(xy^{-1}\in\supp(f_B) \hspace{0.4cm}\vee \hspace{0.4cm} x\in\supp(f_B) \right)\hspace{0.4cm}\wedge \hspace{0.4cm} y \in\supp(\eta) \bigg)\\
\Longrightarrow\ \bigg(\left(xy^{-1}\in\supp(f_B) \hspace{0.4cm}\wedge \hspace{0.4cm} y\in\supp(\eta)\right) \hspace{0.4cm}\vee \hspace{0.4cm} \left(x\in\supp(f_B) \hspace{0.4cm}\wedge \hspace{0.4cm} y\in\supp(\eta)\right) \bigg)\\
\Longrightarrow\ \bigg((x,y)\in \left(\supp(f_B) \cdot\supp(\eta)\right)\times \supp(\eta) \hspace{0.4cm}\vee \hspace{0.4cm} (x,y)\in\supp(f_B)\times \supp(\eta)\bigg).
\end{gather*}

\noindent
We conclude that $\{F\neq 0\}$ is $\sigma-$compact. 

Finally, we are in position to apply Minkowski's integral inequality:
\begin{equation*}
\begin{split}
\|f\star\eta - f\|_2 &= \|f_B\star\eta - f_B\|_2 = \left(\int_G\ \bigg|\int_G\ \big(f_B\left(xy^{-1}\right)-f_B(x)\big)\cdot \eta(y)\ dy\bigg|^2\ dx\right)^{\frac{1}{2}}\\ 
&\stackrel{(\ref{Minkineq})}{\leqslant} \int_G\ \left( \int_G\ |f_B\left(xy^{-1}\right)-f_B(x)|^2\cdot |\eta(y)|^2\ dx\right)^{\frac{1}{2}} \ dy \\
&= \int_G\ \|R_yf_B-f_B\|_2\cdot |\eta(y)| \ dy \stackrel{\text{Lemma } \ref{auxilarylemma1}}{\leqslant} \left(\int_G\ \left(\sup_{\phi\in\Spher^+}\ \frac{\left|\phi(y) - 1\right|^2}{(1+\gamma(\phi)^2)^s}\right)^{\frac{1}{2}}\cdot |\eta(y)|\ dy\right) \cdot \|f_B\|_{H_{\gamma}^s}\\
&= \sup_{y\in\supp(\eta)}\ \left(\sup_{\phi\in\Spher^+}\ \frac{\left|\phi(y) - 1\right|}{(1+\gamma(\phi)^2)^\frac{s}{2}}\right)\cdot \|f_B\|_{H_{\gamma}^s} = \sup_{y\in\supp(\eta)}\ \left(\sup_{\phi\in\Spher^+}\ \frac{\left|\phi(y) - 1\right|}{(1+\gamma(\phi)^2)^\frac{s}{2}}\right)\cdot \|f\|_{H_{\gamma}^s},
\end{split}
\end{equation*}

\noindent
which ends the proof.
\end{proof}

Up to this point, $(G,K)$ was an arbitrary Gelfand pair with $G$ a locally compact Hausdorff group and $K$ its compact subgroup. Now we impose a further restriction, namely:
\begin{center}
\textit{we assume that $G$ is a compact group.}
\end{center}

\noindent
For brevity we will write that $(G,K)$ is a compact Gelfand pair, meaning that $G$ is compact. This assumption somewhat simplifies the ``dual object'' of $(G,K)$. In summary, if $G$ is compact then $\Spher(G,K) = \Spher^b(G,K) = \Spher^+(G,K)$ (comp. Theorem 9.10.1 in \cite{Wolf}, p. 204) and this is a compact space due to Theorem 9.1.13 in \cite{Wolf}, p. 183 and the fact that $L^1(\KGK)$ has a unit element. 

\begin{lemma}
Let $(G,K)$ be a compact Gelfand pair and let $p,q\in(1,\infty).$ If $(f_n)\subset L^p(\KGK)$ is a weakly convergent sequence (with limit function $f$), then for every $\eta\in C_c(\KGK)$ the sequence $(f_n\star\eta)$ converges (strongly) to $f\star\eta$ in $L^q(\KGK)$. 
\label{fnetaconvergesfeta}
\end{lemma}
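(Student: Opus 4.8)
The plan is to exploit the fact that on a compact Gelfand pair the convolution with a fixed $\eta \in C_c(\KGK)$ is a very well-behaved operator: it smooths $L^p$ functions into a relatively compact set, and it converts weak convergence into strong convergence. First I would observe that since $G$ is compact, its Haar measure is finite, so $L^q(\KGK) \subset L^p(\KGK)$ (or the reverse, depending on exponents) with continuous inclusion, and all the usual Young-type convolution estimates $\|g \star \eta\|_r \leqslant \|g\|_p \|\eta\|_{p''}$-style bounds hold with finite constants. In particular the map $g \mapsto g \star \eta$ is bounded from $L^p(\KGK)$ into $L^q(\KGK)$ for the relevant exponents, so $\|f_n \star \eta - f \star \eta\|_q$ at least makes sense and the sequence $(f_n \star \eta)$ is norm-bounded in $L^q$.

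The core of the argument is pointwise (or rather ``coefficient-wise'') convergence combined with a domination/equicontinuity argument. For a fixed $x \in G$, the functional $g \mapsto (g\star\eta)(x) = \int_G g(xy^{-1})\eta(y)\,dy = \int_G g(z)\eta(z^{-1}x)\,dz$ is, for $\eta \in C_c(\KGK) \subset L^{p'}(\KGK)$ (finite measure!), a bounded linear functional on $L^p(\KGK)$; hence weak convergence $f_n \rightharpoonup f$ gives $(f_n \star \eta)(x) \to (f \star \eta)(x)$ for every $x$. Next I would upgrade this to uniform convergence: the key point is that $\{f_n \star \eta : n \in \naturals\}$ is equicontinuous and uniformly bounded on the compact space $G$. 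Uniform boundedness follows from $|(f_n\star\eta)(x)| \leqslant \|f_n\|_p \|\eta\|_{p'}$ together with the uniform boundedness of $\|f_n\|_p$ (weakly convergent sequences are norm-bounded). Equicontinuity follows from continuity of translation in $L^{p'}$ applied to $\eta$: $|(f_n\star\eta)(x) - (f_n\star\eta)(x')| \leqslant \|f_n\|_p \cdot \|L_{x}\eta - L_{x'}\eta\|_{p'}$, and the right translation/left translation map is (uniformly) continuous on the compact group $G$ because $\eta$ is. By Arzel\`a--Ascoli the pointwise-convergent, equicontinuous, uniformly bounded sequence $(f_n\star\eta)$ converges uniformly on $G$ to $f\star\eta$. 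Since $G$ has finite measure, uniform convergence implies convergence in $L^q(\KGK)$ for every $q \in (1,\infty)$, which is exactly the claim.

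I would then just check the bi-$K$-invariance bookkeeping — $f_n \star \eta$ is bi-$K$-invariant when both factors are, so everything lives in the right space $L^q(\KGK)$ — and note that the pointwise value is well-defined (a genuine continuous representative) so there is no ambiguity in ``$(f_n\star\eta)(x)$''. The main obstacle, I expect, is the equicontinuity step: one must be careful that the relevant translation continuity is of $\eta$ in $L^{p'}$ (which holds since $\eta$ is continuous with compact support, and on a compact group translation is automatically uniformly continuous in $L^r$ for $r < \infty$), rather than of $f_n$, for which we have no uniform control beyond norm bounds. A secondary subtlety is making sure the duality pairing is with $L^{p'}$ and that $\eta \in L^{p'}(\KGK)$ — this is where compactness of $G$ (finiteness of Haar measure, so $C_c \subset L^{p'}$ for all $p' < \infty$) is genuinely used, consistent with the remark preceding the lemma that $G$ must now be compact.
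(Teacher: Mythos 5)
Your argument is correct, and it shares its first half with the paper's proof: both start from the norm-boundedness of the weakly convergent sequence ($\|f_n\|_p\leqslant M$) and the observation that, since $z\mapsto\eta(z^{-1}x)$ lies in $L^{p'}(\KGK)$, weak convergence gives the pointwise convergence $(f_n\star\eta)(x)\to(f\star\eta)(x)$. Where you diverge is in upgrading pointwise convergence to $L^q$-convergence. The paper takes the shortest path: H\"older gives the $n$- and $x$-independent bound $|f_n\star\eta(x)-f\star\eta(x)|\leqslant 2M\|\eta\|_{p'}$, which on a compact group is an integrable dominating constant, and Lebesgue's dominated convergence theorem finishes the job. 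You instead establish equicontinuity of $\{f_n\star\eta\}$ via the uniform continuity of translation of $\eta$ in $L^{p'}$ (correctly noting that the translation control must fall on $\eta$, not on $f_n$), and conclude uniform convergence on the compact group $G$, hence convergence in every $L^q$. Your route costs one extra estimate but buys a strictly stronger conclusion (uniform, i.e.\ $L^\infty$, convergence of $f_n\star\eta$); the paper's route is more economical since only $L^q$-convergence is needed for the Rellich--Kondrachov argument that follows. One cosmetic remark: once you have equicontinuity plus pointwise convergence on a compact space, uniform convergence follows directly by a finite-cover argument, so the appeal to Arzel\`a--Ascoli (extracting subsequences and identifying their limits) is slightly heavier machinery than required, though of course valid.
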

\begin{proof}
Firstly, we fix a function $\eta\in C_c(\KGK).$ The sequence $(f_n)$ is a weakly convergent, so it is bounded in $L^p(\KGK)$, i.e. there exists $M>0$ such that $\|f_n\|_p\leqslant M$ for every $n\in\naturals$ (comp. Proposition 3.5 in \cite{Brezis}, p. 58). Furthermore, by definition of weak convergence, for every $x\in G$ we have
\begin{gather*}
f_n\star\eta(x) = \int_G\ f_n\left(xy^{-1}\right)\cdot \eta(y)\ dy \stackrel{y\mapsto yx}{=} \int_G\ f_n\left(y^{-1}\right)\cdot \eta(yx)\ dy \stackrel{y\mapsto y^{-1}}{=} \int_G\ f_n(y)\cdot \eta\left(y^{-1}x\right)\ dy\\
\stackrel{n\rightarrow\infty}{\longrightarrow} \int_G\ f(y)\cdot \eta\left(y^{-1}x\right)\ dy \stackrel{y\mapsto y^{-1}}{=}\int_G\ f\left(y^{-1}\right)\cdot \eta(yx)\ dy\stackrel{y\mapsto yx^{-1}}{=} \int_G\ f_n\left(xy^{-1}\right)\cdot \eta(y)\ dy = f\star\eta(x).
\end{gather*}

\noindent
In other words, the sequence $(f_n\star\eta)$ converges pointwise to $f\star\eta.$

Last but not least, for every $x\in G$ we have
\begin{equation*}
\begin{split}
|f_n\star\eta(x) - f\star\eta(x)| &= \left|\int_G\ \left(f_n\left(xy^{-1}\right) - f\left(xy^{-1}\right)\right)\cdot \eta(y)\ dy\right| \stackrel{y\mapsto yx}{=} \left|\int_G\ \left(f_n\left(y^{-1}\right) - f\left(y^{-1}\right)\right)\cdot \eta(yx)\ dy\right|\\ 
&\stackrel{y\mapsto y^{-1}}{=} \left|\int_G\ \left(f_n(y) - f(y)\right)\cdot \eta\left(y^{-1}x\right)\ dy\right| \stackrel{\text{H\"{o}lder ineq.}}{\leqslant} \|f_n - f\|_p\cdot \left(\int_G\ \left|\eta\left(y^{-1}x\right)\right|^{p'}\ dy\right)^{\frac{1}{p'}} \\
&\stackrel{y\mapsto xy}{\leqslant} 2M\cdot \left(\int_G\ \left|\eta\left(y^{-1}\right)\right|^{p'}\ dy\right)^{\frac{1}{p'}} \stackrel{y\mapsto y^{-1}}{=} 2M\cdot \|\eta\|_{p'}.
\end{split}
\end{equation*}

\noindent
Since $G$ is compact, then $2M\cdot \|\eta\|_{p'}$ is an integrable (with arbitrary power) dominating function for $|f_n\star\eta - f\star\eta|$. Finally, applying the Lebesgue dominated convergence theorem (comp. Theorem 4.2 in \cite{Brezis}, p. 90) we obtain 
$$\lim_{n\rightarrow\infty}\ \|f_n\star\eta - f\star\eta\|_q = 0,$$

\noindent
which concludes the proof.
\end{proof}

We are almost ready to prove Rellich-Kondrachov theorem for Gelfand pairs. One last, missing piece of the puzzle is Vitali convergence theorem:

\begin{thm}(comp. Theorem 7.13 in \cite{Bartle}, p. 76)\\
Let $(f_n)$ be a sequence in $L^p(X,\mu_X),$ where $1\leqslant p < \infty.$ The sequence $(f_n)$ converges to a function $f \in L^p(X,\mu_X)$ if and only if: 
\begin{enumerate}
	\item $(f_n)$ converges to $f$ in measure,
	\item for every $\eps>0$ there exists a measurable set $A_{\eps}$ with $\mu_X(A_{\eps}) < \infty$ and such that 
	$$\forall_{n\in\naturals}\ \int_{G\backslash A_{\eps}}\ |f_n(x)|^p\ dx < \eps^p,$$
	
	\item for every $\eps>0$ there exists $\delta>0$ such that for every measurable set $A$ with $\mu_X(A)<\delta$ we have
	$$\forall_{n\in\naturals}\ \int_A\ |f_n(x)|^p\ dx < \eps^p.$$
\end{enumerate}
\label{Vitaliconvergencetheorem}
\end{thm}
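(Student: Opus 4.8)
The statement is a biconditional, so the plan is to treat the two implications separately; the forward implication is routine while the converse carries the real content. For the forward direction, suppose $\|f_n - f\|_p \to 0$. Condition (1) is immediate from the Chebyshev--Markov inequality, since $\mu_X(\{|f_n - f|\geqslant\sigma\}) \leqslant \sigma^{-p}\|f_n-f\|_p^p \to 0$ for every $\sigma>0$. For conditions (2) and (3) I would combine three ingredients: the integral of the single function $|f|^p\in L^1$ is absolutely continuous and tight (there is a finite-measure set carrying all but an $\eps$-fraction of its mass, and sets of small measure carry little mass); the triangle inequality in $L^p$ restricted to any measurable set $A$ controls $(\int_A|f_n|^p)^{1/p}$ by $\|f_n-f\|_p + (\int_A|f|^p)^{1/p}$; and the finitely many indices $n$ below the convergence threshold are handled one at a time, each $|f_n|^p$ being individually integrable, by intersecting the resulting finitely many sets (for tightness) or taking the minimum of the resulting thresholds (for absolute continuity).

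For the converse, assume (1)--(3). First I would secure two preliminary facts. By Riesz's theorem, convergence in measure yields a subsequence $f_{n_k}\to f$ almost everywhere, which I will feed into Fatou's lemma. Next I would show the family is bounded in $L^p$: applying (2) with $\eps=1$ fixes a finite-measure set $A_1$ with $\int_{X\backslash A_1}|f_n|^p<1$ for all $n$, and on the finite-measure set $A_1$ I would partition into roughly $\lceil \mu_X(A_1)/\delta\rceil$ pieces of measure below the $\delta$ furnished by (3), so that $\int_{A_1}|f_n|^p$ is bounded uniformly in $n$. Combining the two estimates gives $\sup_n\|f_n\|_p<\infty$, and Fatou along the a.e.-convergent subsequence then gives $f\in L^p$ with finite norm.

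The core of the converse is a splitting of $X$ into a finite-measure part and its complement. Given $\eps>0$, condition (2) supplies a finite-measure set $A$ on which $\int_{X\backslash A}|f_n|^p<\eps^p$ for all $n$; applying Fatou on $X\backslash A$ along the a.e.\ subsequence gives $\int_{X\backslash A}|f|^p\leqslant\eps^p$ as well, so $\|f_n-f\|_{L^p(X\backslash A)}\leqslant 2\eps$ uniformly in $n$. On $A$ I would run the classical finite-measure Vitali argument: split $A$ into $E_n:=A\cap\{|f_n-f|\geqslant\sigma\}$ and its complement in $A$. On $A\backslash E_n$ the difference is pointwise below $\sigma$, so its $L^p$-mass is at most $\sigma^p\mu_X(A)$, which is small for small $\sigma$. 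On $E_n$ I would use the elementary bound $|f_n-f|^p\leqslant 2^{p-1}(|f_n|^p+|f|^p)$ together with the uniform absolute continuity from (3) (applied to the $f_n$) and the absolute continuity of $\int|f|^p$ (legitimate now that $f\in L^p$): since convergence in measure forces $\mu_X(E_n)\to 0$, for $n$ large $\mu_X(E_n)$ drops below the threshold $\delta$ and both integrals over $E_n$ become small. Assembling the contributions over $X\backslash A$, over $A\backslash E_n$, and over $E_n$ yields $\limsup_n\|f_n-f\|_p\leqslant C\eps$, and letting $\eps\to 0$ closes the argument.

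The main obstacle is bookkeeping the infinite-measure setting: neither uniform integrability in the sense of (3) nor convergence in measure is by itself enough, and the whole point is to show that (2) and (3) jointly upgrade convergence in measure to $L^p$ convergence while simultaneously guaranteeing that the limit lies in $L^p$. The delicate steps are extracting the uniform $L^p$-bound from (3) on a finite-measure set (which requires the partition-into-small-pieces device rather than a single application of the condition) and justifying the Fatou estimates for the limit $f$ on $X\backslash A$, since $f$ is known a priori only through convergence in measure.
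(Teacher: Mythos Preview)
The paper does not prove this theorem; it is quoted with a reference to Bartle (Theorem 7.13) and then invoked as a black box in the proof of the Rellich--Kondrachov result. So there is no in-paper argument to compare against, and your sketch is essentially the standard textbook proof.

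The overall architecture is sound: the forward direction via Chebyshev plus absolute continuity and tightness of the single integrable function $|f|^p$, and the converse via splitting off a finite-measure set from condition (2) followed by a level-set decomposition on that set. One step does need care. Your device of partitioning $A_1$ into roughly $\lceil \mu_X(A_1)/\delta\rceil$ pieces each of measure below $\delta$ is not available in an arbitrary measure space: an atom of measure at least $\delta$ cannot be subdivided, so condition (3) gives no information on it and the uniform $L^p$-bound does not follow from the partition alone. The cleanest repair is to bypass the uniform bound entirely and show directly that $(f_n)$ is Cauchy in $L^p$: run your level-set argument on $A$ with $f_m$ in place of $f$, use that convergence in measure implies Cauchy in measure to force $\mu_X\big(A\cap\{|f_n-f_m|\geqslant\sigma\}\big)<\delta$ for large $n,m$, apply (3) to both $f_n$ and $f_m$ on that set, and conclude via completeness of $L^p$ together with uniqueness of limits in measure. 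This route never requires knowing $f\in L^p$ in advance and sidesteps the atom issue completely.
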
 

At last, we gathered all the required tools to prove the culminating result of the current section, namely Rellich-Kondrachov theorem for compact Gelfand pairs. One last piece of terminology prior to the theorem itself: we say (comp. Definition 7.25 in \cite{RenardyRogers}, p. 211) that a Banach space $X$ is \textit{compactly embedded} in a Banach space $Y$ (which we denote $X\stackrel{c}{\hookrightarrow} Y$) if it is continuously embedded in $Y$ and if the embedding is a compact map (it maps bounded sets in $X$ to relatively compact sets in $Y$). 

\begin{thm}
Let $(G,K)$ be a compact Gelfand pair, $\alpha > s >0$ and let $p := \frac{2\alpha}{\alpha + s}.$ If 
$$(1+\gamma^2)^{-1}\in L^{\alpha}(\Spher^+,\widehat{\mu}),$$

\noindent
and
\begin{gather}
\lim_{y\rightarrow e}\ \left(\sup_{\phi\in\Spher^+}\ \frac{\left|\phi(y) - 1\right|}{(1+\gamma(\phi)^2)^\frac{s}{2}}\right) = 0,
\label{limitcondition}
\end{gather}

\noindent
then $H^s_{\gamma}(\KGK)\stackrel{c}{\hookrightarrow} L^q(\KGK)$ for every $q \in[1,p']$. In other words, $H^s_{\gamma}(\KGK)$ embeds compactly in $L^q(\KGK).$
\label{RellichKondrachov}
\end{thm}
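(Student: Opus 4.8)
The plan is to follow the classical blueprint for Rellich--Kondrachov: take a bounded sequence in $H^s_\gamma(\KGK)$, extract a weakly convergent subsequence, show the convolutions with a suitable approximate identity converge strongly, and then upgrade this to strong convergence of the subsequence itself in $L^q$. First I would note that by Theorem \ref{Sobolevembeddingtheorem} the hypothesis $(1+\gamma^2)^{-1}\in L^\alpha(\Spher^+,\widehat\mu)$ already gives the continuous embedding $H^s_\gamma(\KGK)\hookrightarrow L^{p'}(\KGK)$, and since $G$ is compact, $L^{p'}(\KGK)\hookrightarrow L^q(\KGK)$ for all $q\in[1,p']$; so only the compactness of the embedding needs proof. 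Let $(f_n)\subset H^s_\gamma(\KGK)$ be bounded, say $\|f_n\|_{H^s_\gamma}\leqslant M$. Since $H^s_\gamma(\KGK)$ is a Hilbert space (isometric to $L^2(\Spher^+,\nu)$), it is reflexive, so after passing to a subsequence we may assume $f_n\rightharpoonup f$ weakly in $H^s_\gamma(\KGK)$, hence also weakly in $L^{p'}(\KGK)$ and in $L^2(\KGK)$.

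Next I would fix $\eps>0$ and, using the limit condition (\ref{limitcondition}), choose a symmetric neighbourhood of $e$ on which $\sup_{\phi}\frac{|\phi(y)-1|}{(1+\gamma(\phi)^2)^{s/2}}<\eps$; inside it pick a nonnegative $\eta\in C_c(\KGK)$ with $\int_G\eta=1$ and $\supp(\eta)$ contained in that neighbourhood (this is possible since $G$ is a compact group, hence in particular a Gelfand pair's $G$ is Tychonoff and we can bi-$K$-average a bump function). By Lemma \ref{auxilarylemma2}, for every $g\in H^s_\gamma(\KGK)$ we get $\|g\star\eta-g\|_2\leqslant \eps\,\|g\|_{H^s_\gamma}$; applied to $g=f_n-f_m$ this yields $\|(f_n-f_m)\star\eta-(f_n-f_m)\|_2\leqslant 2M\eps$ uniformly in $n,m$ (the weak limit $f$ lies in $H^s_\gamma$ with $\|f\|_{H^s_\gamma}\leqslant M$ by lower semicontinuity of the norm, so this bound is legitimate — applied to $g=f_n$ and $g=f$ separately then combined). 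Meanwhile, Lemma \ref{fnetaconvergesfeta} (with $p=2$, $q=2$) gives that $f_n\star\eta\to f\star\eta$ strongly in $L^2(\KGK)$, so $(f_n\star\eta)$ is Cauchy in $L^2$. Combining: for $n,m$ large, $\|f_n-f_m\|_2\leqslant \|f_n-f_n\star\eta\|_2+\|f_n\star\eta-f_m\star\eta\|_2+\|f_m\star\eta-f_m\|_2\leqslant 2M\eps+2M\eps+\eps$, which shows $(f_n)$ is Cauchy in $L^2(\KGK)$, hence strongly convergent there; its limit must be $f$.

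Finally I would promote $L^2$-convergence to $L^q$-convergence for $q\in[1,p']$. For $q\leqslant 2$ this is immediate from compactness of $G$ (finite measure, so $L^2\hookrightarrow L^q$). For $2<q\leqslant p'$ I would interpolate: $\|f_n-f\|_q\leqslant \|f_n-f\|_2^{\theta}\,\|f_n-f\|_{p'}^{1-\theta}$ for the appropriate $\theta\in(0,1]$ with $\frac1q=\frac\theta2+\frac{1-\theta}{p'}$, and bound $\|f_n-f\|_{p'}\leqslant \|f_n-f\|_{H^s_\gamma}\leqslant 2M$ using Theorem \ref{Sobolevembeddingtheorem}; since $\|f_n-f\|_2\to0$ the product tends to $0$. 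Alternatively one can invoke the Vitali convergence theorem (Theorem \ref{Vitaliconvergencetheorem}): convergence in $L^2$ gives convergence in measure, the uniform $L^{p'}$-bound gives uniform integrability of $|f_n|^q$ for $q<p'$, and finiteness of $\mu_G$ disposes of condition (2), yielding $f_n\to f$ in $L^q$. This proves every bounded sequence has a subsequence converging strongly in $L^q(\KGK)$, i.e. the embedding is compact.

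The main obstacle I anticipate is the interchange of the various modes of convergence cleanly — in particular making sure the weak limit $f$ genuinely lies in $H^s_\gamma(\KGK)$ with controlled norm (so that Lemma \ref{auxilarylemma2} may be applied to it), and verifying that $\supp(\eta)$ can be made to lie in an arbitrarily small neighbourhood of $e$ while keeping $\eta$ bi-$K$-invariant and of total mass one; everything else is a standard three-$\eps$ argument plus interpolation.
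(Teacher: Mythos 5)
Your proposal is correct and follows essentially the same route as the paper: continuous embedding via Theorem \ref{Sobolevembeddingtheorem}, extraction of a weakly convergent subsequence, the three-$\eps$ estimate built on Lemmas \ref{auxilarylemma2} and \ref{fnetaconvergesfeta} with $\eta$ chosen via the limit condition (\ref{limitcondition}), and an upgrade from $L^2$- to $L^q$-convergence. Your minor variations are if anything slightly cleaner than the paper's version: working with the Cauchy difference $\|f_n-f_m\|_2$ removes the need to mollify the limit function $f$ itself (the paper's appeal to $\|f\star\eta-f\|_q<\eps$), extracting weak convergence in the Hilbert space $H^s_\gamma(\KGK)$ gives the limit controlled Sobolev norm for free, and the interpolation inequality $\|g\|_q\leqslant\|g\|_2^{\theta}\|g\|_{p'}^{1-\theta}$ is a tidy alternative to the paper's Vitali-theorem argument for the endpoint $q=p'$.
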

\begin{proof}
Firstly we fix $q \in[1,p'].$ By Theorem \ref{Sobolevembeddingtheorem} we already know that $H^s_{\gamma}(\KGK)$ is continuously embedded in $L^{p'}(\KGK).$ Since $G$ is compact then $L^{p'}(\KGK)\hookrightarrow L^q(\KGK)$ and, as a consequence, $H^s_{\gamma}(\KGK)$ embedds continuously in $L^q(\KGK).$   

It remains to prove that the embedding is compact. By definition we need to check that every bounded set in $H^s_{\gamma}(\KGK)$ is mapped to a relatively compact set in $L^q(\KGK).$ Since we are dealing with metric (even Banach!) spaces, it suffices to prove that every bounded sequence $(f_n)\subset H^s_{\gamma}(\KGK)$ has a convergent subsequence in $L^q(\KGK)$. In other words, instead of checking topological compactness we check the equivalent (in this case) sequential compactness. 

We observe that (due to continuous embedding) a bounded sequence $(f_n)\subset H^s_{\gamma}(\KGK)$ is also bounded in $L^{p'}(\KGK).$ Consequently, we can choose a weakly convergent subsequence of $(f_n) -$ for notational convenience and clarity we still denote this convergent subsequence by $(f_n)$. Let $f\in L^{p'}(\KGK)$ be the weak limit of $(f_n).$ 

Fix $\eps > 0$ and let $\eta\in C_c(\KGK)$ be such that $\|f\star \eta - f\|_q < \eps$ (comp. Proposition 2.42 in \cite{Folland}, p. 53) and 
\begin{gather}
\sup_{y\in\supp(\eta)}\ \left(\sup_{\phi\in\Spher^+}\ \frac{\left|\phi(y) - 1\right|}{(1+\gamma(\phi)^2)^\frac{s}{2}}\right)\cdot M < \eps,
\label{wechooseeta}
\end{gather}

\noindent
where $M>0$ is a constant $H_{\gamma}^s(\KGK)-$bound of the sequence $(f_n)$. The latter condition can be satisfied due to (\ref{limitcondition}). For such a choice of $\eta$ we have
\begin{equation*}
\begin{split}
\|f_n - f\|_2 &\leqslant \|f_n - f_n\star\eta\|_2 + \|f_n\star\eta - f\star\eta\|_2 + \|f\star\eta - f\|_2\\
&\stackrel{\text{Lemma } \ref{auxilarylemma2}}{\leqslant} \sup_{y\in\supp(\eta)}\ \left(\sup_{\phi\in\Spher^+}\ \frac{\left|\phi(y) - 1\right|}{(1+\gamma(\phi)^2)^\frac{s}{2}}\right)\cdot \|f_n\|_{H^s_{\gamma}} + \|f_n\star\eta - f\star\eta\|_2 + \eps \\
&\stackrel{(\ref{wechooseeta})}{\leqslant} 2\eps + \|f_n\star\eta - f\star\eta\|_2,
\end{split}
\end{equation*}

\noindent
and since $\|f_n\star\eta - f\star\eta\|_2 \longrightarrow 0$ by Theorem \ref{fnetaconvergesfeta}, we conclude that $\lim_{n\rightarrow \infty}\ \|f_n - f\|_2\leqslant 2\eps.$ Since $\eps>0$ was chosen arbitrarily then $\lim_{n\rightarrow \infty}\ \|f_n - f\|_2 = 0.$

Since $f_n\rightarrow f$ in $L^2(\KGK)$ then the sequence $(f_n)$ also converges to $f$ in measure (comp. \cite{Royden}, p. 95). Furthermore, $G$ is compact so the second condition in Vitali convergence theorem is automatically satisfied. Last but not least, the sequence $(f_n)$ is uniformly $L^q(\KGK)-$integrable, so the third condition of Vitali convergence theorem is satisfied. We conclude that $(f_n)$ converges to $f$ in $L^q(\KGK).$ 
\end{proof}


\begin{thebibliography}{9}
	\bibitem{AdamsFournier}
		Adams R. A., Fournier J. J. F. : \textit{Sobolev spaces}, Academic press, New York, 2003
	\bibitem{AgarwalOterEspinarPereraVivero}
		Agarwal R. P., Otero-Espinar V., Perera K., Vivero D.R. : \textit{Basic properties of Sobolev's spaces on time scales}, Adv. Differ. Equ., Article number: 038121 (2006)
	\bibitem{AhmadBaigRahmanSaleem}
		Ahmad N., Baig H. A., Rahman G., Saleem M. S. : \textit{Sobolev's embedding on time scales}, J. Inequal. Appl., Vol. 1, Article number: 134 (2018)
	\bibitem{BahrouniRadulescu}
		Bahrouni A., R\u{a}dulescu V. D. : \textit{On a new fractional Sobolev space and applications to nonlocal variational problems with variable exponent}, Discrete Cont. Dyn.-S, Vol. 11, No. 3, p. 379-389 (2018)
	\bibitem{BandaliyevHasanov}
		Bandaliyev R. A., Hasanov S. G. : \textit{On denseness of $C_0^{\infty}(\Omega)$ and compactness in $L^{p(x)}$ for \mbox{$0 < p(x) < 1$}}, Mosc. Math. J., Vol 1, No. 18, p. 1-13 (2018)
	\bibitem{BarnabyKarman}
		Barnaby N., Karman N. : \textit{Dynamics with infinitely many derivatives: the initial value problem}, J. High Energy Phys., Vol. 2008 (2008)
	\bibitem{Bartle}
		Bartle R. G. : \textit{The Elements of Integration and Lebesgue Measure}, A Wiley-Interscience Publication, New York, 1995
	\bibitem{Bechtell}
		Bechtell H. : \textit{The theory of groups}, Addison-Wesley Publishing Company, London, 1971
	\bibitem{BeldzinskiGalewski1}
		Be\l{}dzi\'nski M., Galewski M. : \textit{Global diffeomorphism theorem applied to the solvability ofdiscrete and continuous boundary value problems}, J. Differ. Equ. Appl., Vol. 24, No. 2 (2017)
	\bibitem{BeldzinskiGalewski2}
		Be\l{}dzi\'nski M., Galewski M. : \textit{On unique solvability of a Dirichlet problem with nonlinearity depending on the derivative}, Opusc. Math., Vol. 39, No. 2, p. 131-144 (2019)
	\bibitem{BeldzinskiGalewski3}
		Be\l{}dzi\'nski M., Galewski M. : \textit{On the Existence and Uniqueness of Dirichlet Problems on a Positive Half-Line}, Minimax Theory its Appl., Vol. 4, No. 1, p. 55-69 (2019)
	\bibitem{Brezis}
		Brezis H. : \textit{Functional Analysis, Sobolev Spaces and Partial Differential Equations}, Springer, New York, 2011
	\bibitem{CalcagniMontobbioNardelli}
		Calcagni G., Montobbio M., Nardelli G. : \textit{Localization of nonlocal theories}, Phys. Lett. B, Vol. 662, No. 3, p. 285-289 (2008)
	\bibitem{Carmichael1}
		Carmichael, R. D. : \textit{Linear differential equations of infinite order}, Bull. AMS, Vol. 42, p. 193-218 (1936)
	\bibitem{Carmichael2}
		Carmichael, R. D. : \textit{On non-homogeneous linear differential equations of infinite order with constant coefficients}, Am. J. Math., Vol. 58, No. 3, p. 473-486 (1936)
	\bibitem{Carleson}
		Carleson, L. : \textit{On infinite differential equations with constant coefficients}, I. Math. Scand., Vol. 1, p. 31-38 (1953)
	\bibitem{CarlssonPradoReyes}
		Carlsson M., Prado H., Reyes E. G. : \textit{Differential Equations with Infinitely Many Derivatives and the Borel Transform}, Ann. Henri Poincar'e, Vol. 17, p. 2049-2074 (2016) 
	\bibitem{Deitmar}
		Deitmar A. : \textit{A First Course in Harmonic Analysis}, Springer-Verlag, New York, 2005
	\bibitem{DeitmarEchterhoff}
	  Deitmar A., Echterhoff S. : \textit{Principles of Harmonic Analysis}, Springer, New York, 2009
	\bibitem{Demengels}
		Demengel F., Demengel G. : \textit{Functional Spaces for the Theory of Elliptic Partial Differential Equations}, Springer-Verlag, London, 2012 
	\bibitem{Dijk}
		van Dijk G. : \textit{Introduction to Harmonic Analysis and Generalized Gelfand Pairs}, de Gruyter, Berlin, 2009
	\bibitem{EdmundsRakosnik1}
		Edmunds D. E., R\'{a}kosn\'{i}k J. : \textit{Density of smooth functions in $W^{k,p(x)}(\Omega)$}, Proc. Roy. Soc. London Ser. A, Vol. 437, No. 1899, p. 229-236 (1992) 
	\bibitem{EdmundsRakosnik2}
		Edmunds D. E., R\'{a}kosn\'{i}k J. : \textit{Sobolev embeddings with variable exponent}, Stud. Math., Vol. 143, No. 3, p. 267-293 (2000)
	\bibitem{Evans}
		Evans L. C. : \textit{Partial Differential Equations}, American Mathematical Society, Providence, 1998
	\bibitem{FengSuYao}
		Feng Z., Su Y. H., Yao J. : \textit{Sobolev spaces on time scales and applications to semilinear Dirichlet problem}, Dyn. Partial Differ. Equ., Vol. 12, No. 3, p.241-263 (2015)
	\bibitem{Folland}
		Folland G. B. : \textit{A Course in Abstract Harmonic Analysis}, CRC Press, London, 1995
	\bibitem{FollandRealAnalysis}
		Folland G. B. : \textit{Real Analysis: Modern techniques and their applications}, John Wiley and Sons, New York, 1999
	\bibitem{GorkaKostrzewaReyes} 
		G\'orka P., Kostrzewa T., Reyes E. G. : \textit{The Rellich lemma on compact abelian groups and equations of infinite order}, Int. J. Geom. Methods Mod. Phys., Vol. 10, No. 2 (2013)
	\bibitem{GorkaKostrzewaReyes2}
		G\'orka P., Kostrzewa T., Reyes E. G. : \textit{Sobolev Spaces on Locally Compact Abelian Groups: Compact Embeddings and Local Spaces}, J. Funct. Space, Vol. 2014, Article number: 404738 (2014)
	\bibitem{GorkaReyes}
		G\'orka P., Reyes E. G. : \textit{Sobolev spaces on locally compact abelian groups and the bosonic string equation}, J. Aust. Math. Soc., Vol. 98, No. 1, p. 39-53 (2015) 
	\bibitem{Grafakos}
		Grafakos L. : \textit{Modern Fourier analysis}, Springer, New York, 2009
	\bibitem{Graffi}
		Graffi D. (editor) : \textit{Materials with memory}, Springer-Verlag, Berlin, 2011
	\bibitem{Helgason}
		Helgason S. : \textit{Groups and Geometric Analysis. Integral Geometry, Invariant Differential Operators, and Spherical Functions}, American Mathematical Society, Providence, 2002
	\bibitem{HewittRoss2}
		Hewitt E., Ross K. A. : \textit{Abstract Harmonic Analysis II}, Springer-Verlag, Berlin, 1970
	\bibitem{Katznelson}
		Katznelson Y. : \textit{An introduction to harmonic analysis}, Cambridge University Press, Cambridge, 2004
	\bibitem{KovacikRakosnik}
		Kov\'{a}\v{c}ik, R\'{a}kosn\'{i}k J. : \textit{On spaces $L^{p(x)}$ and $W^{k,p(x)}$}, Czechoslovak Math. J., Vol. 41, No. 4, p. 592-618 (1991)
	\bibitem{Leoni}
		Leoni G. : \textit{A First Course in Sobolev Spaces}, American Mathematical Society, Providence, 2009
	\bibitem{LiZhou}
		Li Y., Zhou J. : \textit{Sobolev's spaces on time scales and its applications to a class of second order Hamiltonian systems on time scales}, Nonlinear Anal., Real World Appl., Vol. 73, No. 5, p. 1375-1388 (2010)
	\bibitem{Malliavin}
		Malliavin P. (with Airault H., Kay L., Letac G.) : \textit{Integration and Probability}, Springer-Verlag, New York, 1995
	\bibitem{MazyaShaposhnikova}
		Maz'ya V., Shaposhnikova T. : \textit{On the Bourgain, Brezis, and Mironescu theorem concerning limiting embeddings of fractional Sobolev spaces}, J. Funct. Anal., Vol. 195, No. 2, p. 230-238 (2002)
	\bibitem{MihailescuRadulescu}
		Mih\u{a}ilescu M., R\u{a}dulescu V. : \textit{On a nonhomogeneous quasilinear eigenvalue problem in Sobolev spaces with variable exponent}, Proc. Amer. Math. Soc., Vol. 135, p. 2929-2937 (2007) 
	\bibitem{Natanson} 
		Natanson I. P. : \textit{Theory of functions of a real variable}, Dover Publications, New York, 2016
	\bibitem{Necas}
		Ne\v{c}as J. : \textit{Direct Methods in the Theory of Elliptic Equations}, Springer-Verlag, Berlin, 2012
	\bibitem{NezzaPalatucciValdinoci}
		Nezza E., Palatucci G., Valdinoci E. : \textit{Hitchhiker's guide to the fractional Sobolev spaces}, Bull. Sci. Math., Vol. 136, No. 5, p. 521-573 (2012)
	\bibitem{Pego}
		Pego R. L. : \textit{Compactness in $L^2$ and the Fourier transform}, Proc. Amer. Math. Soc., Vol. 95, No. 2, p. 252-254 (1985)
	\bibitem{Rastelli}
		Rastelli L. : \textit{Open string fields and D-branes}, Fortschr. Phys., Vol. 52, p. 302-337 (2004)
	\bibitem{ReedSimon}
		Reed M., Simon B. : \textit{Methods of modern mathematical physics}, Academic Press, San Diego, 1980
	\bibitem{RenardyRogers}
		Renardy M., Rogers R. C. : \textit{An Introduction to Partial differential equations}, Springer-Verlag, New York, 2004 
	\bibitem{Royden}
		Royden H. : \textit{Real analysis}, Macmillan Publishing Company, New York, 1988
	\bibitem{SamkoVakulov}
		Samko S., Vakulov B. : \textit{Weighted Sobolev theorem with variable exponent for spatial and spherical potential operators}, J. Math. Anal. Appl., Vol. 310, No. 1, p. 229-246 (2005)
	\bibitem{SteinSingular}
		Stein E. : \textit{Singular integrals and differentiability properties of functions}, Princeton University Press, Princeton, 1970
	\bibitem{SteinShakarchi}
		Stein E., Shakarchi R. : \textit{Real analysis: measure theory, integration and Hilbert spaces}, Princeton University Press, Princeton 2005
	\bibitem{Strichartz}
		Strichartz R. S. : \textit{Multipliers on fractional Sobolev spaces}, J. Math. Mech., Vol. 16, No. 9, p. 1031-1060 (1967)
	\bibitem{Swanson}
		Swanson D. : \textit{Pointwise inequalities and approximation in fractional Sobolev spaces}, Stud. Math., Vol. 149, p. 147-174 (2002)
	\bibitem{Tartar}
		Tartar L. : \textit{An Introduction to Sobolev Spaces and Interpolation Spaces}, Springer-Verlag, Berlin, 2007
	\bibitem{Taylor}
		Taylor M. E. : \textit{Partial differential equations}, Springer, New York, 1996
	\bibitem{Triebel}
		Triebel H. : \textit{Theory of Functions Spaces III}, Birkh\"{a}user Verlag, Basel, 2006
	\bibitem{Weil}
		Weil A. : \textit{L'int\'{e}gration dans les groupes topologiques et ses applications}, Hermann, Paris, 1965 
	\bibitem{Wolf}
		Wolf J. A. : \textit{Harmonic Analysis on Commutative Spaces}, American Mathematical Society, Providence, 2007
	\bibitem{Ziemer}
		Ziemer W. P. : \textit{Weakly Differentiable Functions: Sobolev Spaces and Functions of Bounded Variation}, Springer-Verlag, New York, 1989
\end{thebibliography}
\end{document}